\newcommand{\beq}{\begin{equation}}
\newcommand{\eeq}{\end{equation}}
\newcommand{\beqn}{\begin{eqnarray}}
\newcommand{\eeqn}{\end{eqnarray}}
\newcommand{\beqno}{\begin{eqnarray*}}
\newcommand{\eeqno}{\end{eqnarray*}}
\newcommand{\bma}{\begin{displaymath}}
\newcommand{\ema}{\end{displaymath}}
\newcommand{\bnu}{\begin{enumerate}}
\newcommand{\enu}{\end{enumerate}}
\newcommand{\bce}{\begin{center}}
\newcommand{\ece}{\end{center}}
\newcommand{\btb}{\begin{tabular}}
\newcommand{\etb}{\end{tabular}}
\newtheorem{theorem}{Theorem}[section]
\newtheorem{proposition}[theorem]{Proposition}
\begin{document}

\title{Resilient Edge Service Placement under Demand and Node Failure Uncertainties}

\author{\IEEEauthorblockN{Jiaming Cheng,~\IEEEmembership{Student Member,~IEEE}, Duong Tung Nguyen,~\IEEEmembership{Member,~IEEE}, \\ and Vijay K. Bhargava,~\IEEEmembership{Life~Fellow,~IEEE}\vspace{-0.5em}
}  
\thanks{Jiaming Cheng and Vijay K.  Bhargava are with the Department of Electrical and Computer Engineering, University of British Columbia, Vancouver, BC, Canada.
Email: \textit\{jiaming, vijayb\}@ece.ubc.ca.

Duong Tung Nguyen is with the  Ira A. Fulton Schools of Engineering, Arizona State University, Tempe, AZ, United States. Email: \textit{duongnt@asu.edu}.
This research was supported, in part, by the Natural Sciences and Engineering Research Council of Canada.
(\textit{Corresponding author}: Duong Tung Nguyen).}}

\maketitle

\begin{abstract}
Resiliency plays a critical role in designing future communication networks. How to make edge computing systems resilient against unpredictable failures and fluctuating demand is an important and challenging problem. 
To this end, this paper investigates a resilient service placement and workload allocation problem for a service provider (SP) who can procure resources from numerous edge nodes to serve its users, considering both resource demand and node failure uncertainties. We introduce a novel two-stage adaptive robust model to capture this problem. The service placement and resource procurement decisions are optimized in the first stage, while the workload allocation decision is determined in the second stage after the uncertainty realization. 
By exploiting the special structure of the uncertainty set, we develop an efficient iterative algorithm that can converge to an exact optimal solution within a finite number of iterations. However, the running time of this iterative  algorithm heavily depends on the  uncertainty set. Therefore, we further present an affine decisions rule approximation approach, which exhibits greater insensitivity to the uncertainty set, to tackle the underlying problem. 
Extensive numerical results demonstrate the advantages of the proposed model and approaches, which can  help the SP make proactive decisions to mitigate the impacts of the uncertainties.

\end{abstract}

\begin{IEEEkeywords}
Resilient edge computing, robustness, resiliency, adaptive robust optimization, node failures, service placement.
\end{IEEEkeywords}
\printnomenclature

\section{Introduction}
Edge computing (EC) has emerged as a crucial computing paradigm that complements  the cloud to meet the stringent  requirements of modern applications such as augmented/virtual reality (AR/VR),  autonomous driving, and manufacturing automation. By locating cloud resources closer to users, devices, and sensors, EC significantly reduces network traffic, improves user experience, and enables the deployment of various low-latency and high-reliability applications. The new network architecture has an EC layer positioned between the cloud and the end devices, as shown in  Fig.~\ref{fig: edge}. Each edge node (EN) can reside anywhere along the edge-to-cloud continuum and may comprise  one or multiple edge servers  \cite{wshi16}. An EN can also be co-located with a point of aggregation (POA). Generally, user requests and sensor data are aggregated at POAs (e.g., switches/routers, base stations) before being transmitted to the edge network or the cloud for further processing.

\begin{figure}[ht!]
	\centering
		\includegraphics[width=0.42\textwidth,height=0.15\textheight]{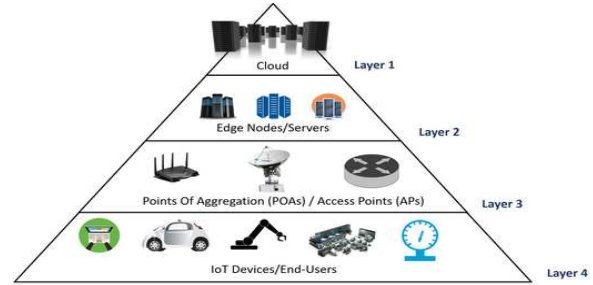}
			\caption{Edge network architecture}
	\label{fig: edge}
\end{figure} 

Despite the tremendous potential, 
EC is still in its nascent stage, and numerous challenges remain to be addressed. One of the major challenges is to establish resilient edge systems that can withstand and recover from disruptions and failures.
Being prepared proactively for an unpredictable future can help protect the network from potential impacts and enable swift recovery from various challenges, ranging from minor misconfigurations to large-scale natural disasters and targeted attacks. The significance of edge resiliency is continuously growing as EC becomes a critical component in the operation of numerous critical infrastructures.

This paper focuses on the resilient service placement and workload allocation problem for a delay-sensitive service, such as AR/VR, real-time translation, remote robotics, and cloud gaming. The service provider (SP) serves a vast number of users situated in different areas. 
To enhance the user experience, the SP can provision the service 
across various ENs to reduce the delay between the users and computing nodes.
The main goal of the SP is to optimize service quality  while minimizing the total operating cost. The service placement decision is typically optimized over a longer time scale than the workload allocation decision. Therefore, a key concern of the SP is \textit{how to make robust placement decisions to maintain good service quality in the presence of unexpected failures and uncertain resource demand}, considering its budget constraint.

The heterogeneity of the ENs poses a significant challenge to selecting suitable nodes for service placement.
Specifically, although placing the service on more ENs can lower the overall delay, it  increases the SP's cost. Furthermore, unlike the traditional cloud with a limited number of large cloud DCs, there are numerous geographically distributed heterogeneous  ENs with different sizes and configurations. In addition, resource prices at different ENs can vary significantly. Therefore, some ENs closer to the users may not be selected due to their higher prices \cite{duongiot}. Along with service placement, the SP must also 
determine the amount of resources to purchase from each selected EN. The service placement and resource procurement decisions are often referred to as service provisioning, 
which is constrained by the operating budget of the SP. 
Given the provisioning decisions, the SP  optimally allocates the workloads from different areas to the selected  ENs with the installed service  to minimize the overall network delay. 
If all system parameters are known exactly, 
the underlying problem can be modeled as a deterministic mixed-integer linear program (MILP) that can be solved using standard MILP solvers. 

Previous studies have predominantly employed deterministic optimization models for optimal edge service placement and workload allocation. However, in practice, numerous system parameters are uncertain, and the SP typically relies on predicted values of these parameters as inputs to the MILP. This deterministic approach can be inefficient if actual values deviate significantly from their forecasts. For instance, if the actual demand is noticeably higher than predicted, under-provisioning may frequently occur, resulting in unmet demand and dropped requests. Conversely, if actual demand is lower than predicted, over-provisioning may lead to unnecessary increases in the SP’s operating costs.

In addition to the demand uncertainty, the unpredictability of node failures can significantly impact the system's performance.
Node failures can result from various factors, including power outages, internal component failures, natural disasters, cyberattacks, and software malfunctions. These unforeseen  events can disrupt EN operations and cause user requests to be reassigned to distant ENs, leading to a suboptimal user experience. Moreover, if the reassigned nodes do not have sufficient resources to handle the demand,
some workload may need to be dropped, which is undesirable.  Thus, ensuring resilience to node failures is crucial in EC systems to maintain good service quality.
The challenge lies in provisioning adequate  edge resources to minimize the total cost while ensuring  the service works properly and can automatically adapt and optimize its operation during extreme events.

To this end, we propose a novel resilience-aware two-stage robust model to compute an optimal resource provisioning solution that can hedge against   resource demand and node failure uncertainties. In decision-making under uncertainty, stochastic optimization (SO) is a popular tool that has been widely applied to various problems in cloud and edge computing \cite{dusit12,dusit17,hbad20}. However, one limitation of SO is its requirement for knowledge of the probability distribution of uncertain data, which can be difficult to obtain. Moreover, the realization of uncertainties during the operational period may not necessarily follow historical patterns \cite{duongiot}, and modeling discrete uncertainties, such as node failures, can pose additional challenges. 

Hence, we advocate robust optimization (RO) \cite{RObook} as a potential solution for addressing uncertainties in the underlying problem.  
To model uncertainties, RO utilizes parametric sets, referred to as uncertainty sets, which are easier to derive than exact probability distributions. 
Additionally,  EN failures can  be intuitively modeled  using cardinality-constrained sets \cite{uncertainty}, without requiring hard-to-obtain probabilistic failure models \cite{chemo20,fhe22}. 
While RO has been utilized in computer networking, 
previous works have mostly focused on single-state static robust models \cite{mjoh15,rkan22,Niyato13,Qu2021,Fair_EC_protect}, which can be overly conservative and fail to capture the sequential nature of certain decisions.

In our  problem, the  provisioning decision must  be made before the actual workload allocation. Thus, instead of using the conventional static RO approach we propose to leverage the two-stage adaptive robust optimization (ARO) approach \cite{CCG,ARO} to tackle the problem. To the best of our knowledge, this is the \textit{first two-stage adaptive robust model for the resilient edge service placement and workload allocation} problem, which jointly considers both demand and node failure uncertainties in EC. Furthermore, both continuous and discrete uncertainties are integrated into a unified RO framework.  By allowing the SP to optimize recourse workload allocation decisions in the second stage, the proposed model is less conservative than the traditional single-stage static RO approach adopted in prior literature \cite{mjoh15,rkan22,Niyato13,Qu2021,Fair_EC_protect}, where  all decisions are optimized at the same time.  The proposed robust solution provides the SP with proactive measures to withstand unexpected failures, ensuring uninterrupted operations while preserving service quality.

It is worth emphasizing that this work differs substantially    from our previous work \cite{duongiot}, which was the first to consider two-stage RO for EC. The differences are in terms of both modeling and solutions. 
In terms of modeling, \cite{duongiot} focuses solely on economic  objectives, whereas  this paper is mainly concerned with ensuring good service quality during failures. Additionally, unlike \cite{duongiot}, we do not consider the cloud option since the service is delay-sensitive.
 Moreover, while \cite{duongiot} only considers  demand uncertainty, this work integrates multiple uncertainties, including both continuous
and discrete uncertain parameters (i.e., resource demand
and node failures), into a unified robust model. Due to
different design goals, the insights gleaned from \cite{duongiot} and
this work are  distinct.

In terms of techniques, we develop two new and efficient solutions for solving the formulated two-stage robust adaptive optimization problem. The solution approach in \cite{duongiot} relies on the column-and-constraint generation (CCG) method and employs the Karush–Kuhn–Tucker (KKT) conditions to solve the second-stage subproblem. However, this approach leads to a large-scale reformulation of the subproblem at each iteration, with a considerable number of complementary constraints, which is difficult to solve. Additionally, the big-M method to tackle the complementary constraints in \cite{duongiot} suffers from weak relaxations. Instead, we propose to use LP duality \cite{lpbook} to reformulate the subproblem, which significantly speeds up the computational time  and provides an exact reformulation, thanks to the unique structure of the uncertainty.
Furthermore, since the CCG-based solution approaches are sensitive to the form and size of the uncertainty sets, we introduce a new approximation scheme to efficiently solve certain large-scale problem instances in a reasonable time. 
This approximation algorithm predefines a mapping rule that maps the second-stage decisions as functions of the first-stage decisions and the revealed uncertainties.
The goal is to optimize the coefficients of these mapping functions. 
Overall, our main contributions can be summarized as follows:

\begin{itemize}
\item \textit{Modeling}: We propose a novel resilience-aware two-stage adaptive robust model for joint optimization of edge service placement, resource procurement, and workload allocation. The uncertainties of demand and EN failures are explicitly integrated into the proposed model, which consists of the service placement and resource procurement in the first stage and the workload allocation decision in the second stage.
   
\item \textit{Solution Approaches:} First, by combining CCG \cite{CCG} with LP duality \cite{lpbook} and several linearization techniques, we develop an efficient algorithm to solve the underlying problem in an iterative master-subproblem framework. Unlike \cite{duongiot} and \cite{CCG} that use the  KKT conditions to solve the subproblem, we leverage the special structure of the uncertainty set and propose to solve the subproblem exactly using LP duality \cite{lpbook}. Second, since the running time of CCG-based algorithms is sensitive to the uncertainty sets, we introduce an affine decision rule (ADR) approximation approach, which is independent of the set sizes and provides a scalable solution. Different from CCG,  ADR is an approximation method and does not work in an iterative manner. Instead, in ADR, the second-stage recourse variables are restricted to be affine functions of the uncertain data. Then, we need to optimize not only the placement and workload allocation decisions but also the coefficients of these affine functions. 

\item \textit{Simulation:} Extensive numerical results demonstrate the superior out-of-sample performance of the proposed ARO model compared to several benchmarks, including the heuristic,  deterministic, and stochastic models. We also illustrate the benefits of considering both the demand and failure uncertainties. Furthermore, our experiments show that ADR can achieve comparable performance with respect to the exact CCG-based methods while requiring significantly less computational time. Finally, sensitivity analyses are conducted to examine the impacts of important system parameters on the optimal Solution. 
\end{itemize}

The rest of the paper is organized as follows. In  Section \ref{system_model}, we present the system model and  problem formulation. The  solution approaches are introduced in Section \ref{solution}, followed by the numerical results in Section \ref{sim}. Section \ref{related_work} discusses related work. Finally, the conclusions are given in  Section \ref{conclusion}.

\section{System Model and Problem Formulation}
\label{system_model}

\subsection{System Model}
In this paper, we investigate the edge resource procurement and management problem for an SP offering a delay-sensitive service, such as AR/VR and cloud gaming. The SP does not own any ENs but instead procures computing resources from an EC market comprising  numerous  geographically distributed heterogeneous ENs. The SP has subscribers located in different areas, where user requests in each area are aggregated at an access point (AP). To reduce network delay and enhance  user experience, the SP can place its services directly on the ENs. The major concerns for the SP are TO determine where to install its service and how much computing resources to procure from each EN. Once the edge resources are procured, the SP must optimally allocate the workload  from different APs to the ENs. The SP is assumed to be  a price-taker with a budget B for  edge resource procurement. 

\begin{figure}[ht!]
	\centering
		\includegraphics[width=0.42\textwidth,height=0.14\textheight]{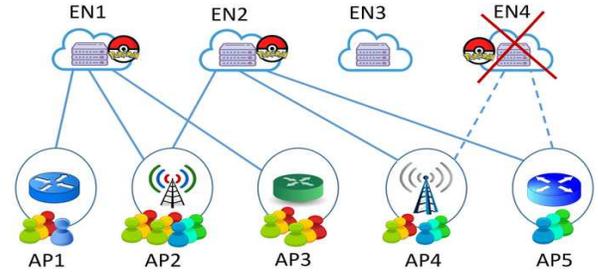}
			\caption{System model}
	\label{fig:model}
\end{figure} 

The system model is shown in Fig.~\ref{fig:model}, which depicts  five areas and four ENs. The service is installed on EN1, EN2, and EN4. In the absence of failures, user requests in areas 4 and 5 can be serviced by EN4, which is the closest EN. However, if  EN4 fails, the workload from these areas can be reassigned to another  EN, such as  EN2, which also has the service installed. Let $i$ be the area index and $\mathcal{I}$ be the set of areas, each of which is represented by an AP. The resource demand in area $i$ is $\lambda_i$. Define $j$ and $\mathcal{J}$ as the EN index and  the set of ENs, respectively.  The resource capacity of EN $j$ is $C_j$. Also, $p_j$ is the price of one unit of computing resource at EN $j$. The delay between AP $i$ and EN $j$ is $d_{i,j}$.
To operate the service, an EN must meet certain hardware and software requirements such as support for Pytorch and Ubuntu. Additionally, for a delay-sensitive service, the requests in an area should be served by ENs that are not too far from that area (e.g., the network delay should not exceed a certain delay threshold).  Hence, we use a binary indicator  $a_{i,j}$ to indicate whether EN $j$ can serve demand from area $i$ or not.  Specifically,  $a_{i,j} = 1$ if the demand from area $i$ can be handled by EN $j$. 

In the first stage, the SP optimizes the service placement and resource procurement decisions.
The placement decision at EN $j$ is represented by a binary variable $t_j$ which equals 1 if the service is installed at the EN. The amount of resource purchased from EN $j$ is denoted by $y_j$. Define $t = (t_1, t_2, \ldots, t_J)$ and $y = (y_1, y_2, \ldots, y_J)$. Given the first-stage decisions, 
in the second stage, the SP optimally allocates the actual demand to different ENs, which have installed the service in the first stage, under the worst-case uncertainty realization. Since the exact demand and EN failures are not known to the SP in the first stage, the procured resources may not be sufficient to serve the actual demand. Hence, a portion of the user requests may be dropped.  Denote by $q_i$ and $P_i$ the amount  of unmet demand and the penalty for each unit of unmet demand in area $i$, respectively. Also, let $x_{i,j}$ be  the workload allocated from area $i$ to EN $j$ in the worst case. 

Depending on specific market settings, the SP may or may not be allowed to adjust the amount of purchased resources during the actual operation period.  In our model, the SP cannot adjust $y_j$ in the second stage, which follows the current practice where cloud providers often require the buyers to purchase resources for usage for a certain minimum amount of time \cite{duongiot,EC2}. Also, during the operation period, the  demand can fluctuate significantly. Thus, the EN owners may not want the SP to frequently adjust $y_j$ for short intervals because the economic benefit is small while the management cost increases. Our model can be easily extended to allow the SP to readjust the amount of resources  in the second stage.

\begin{table}[h!]
\centering
\caption{Notations}
\begin{tabular}{|ll|}
\hline
\multicolumn{1}{|l|}{Notation}            & Definition                                                \\ \hline 
\multicolumn{2}{|c|}{\textbf{Sets and indices}}                                                             \\ \hline
\multicolumn{1}{|l|}{EN, AP}              & Edge Node, Access Point                                \\ \hline
\multicolumn{1}{|l|}{$\mathcal{I}$, I}    & Set and number of areas (APs)                          \\ \hline
\multicolumn{1}{|l|}{$\mathcal{J}$, J}    & Set and number of ENs                                  \\ \hline
\multicolumn{1}{|l|}{$i$, $j$}            & Area (AP) index and EN index                           \\ \hline
\multicolumn{1}{|l|}{$\mathcal{Z}  $}     & Node failure uncertainty set                           \\ \hline
\multicolumn{1}{|l|}{$\mathcal{D}  $}     & Demand uncertainty set                                 \\ \hline
\multicolumn{2}{|c|}{\textbf{System parameters}}                                                              \\ \hline
\multicolumn{1}{|l|}{$C_j$}               & Resource capacity of EN $j$                            \\ \hline
\multicolumn{1}{|l|}{$p_j$}               & Unit price of computing resource at EN  $j$            \\ \hline
\multicolumn{1}{|l|}{$f_j$}               & Service placement cost at EN $j$                       \\ \hline
\multicolumn{1}{|l|}{$l_j^0$}   & $\{0,1\}$, ``1'' if the service is initially available at EN $j$ \\ \hline
\multicolumn{1}{|l|}{$s_j$}               & Storage cost at EN $j$                                 \\ \hline
\multicolumn{1}{|l|}{$a_{i,j}$} & $\{0,1\}$,  ``1'' if EN $j$ can serve demand from area $i$       \\ \hline
\multicolumn{1}{|l|}{$d_{i,j}$}           & Delay between AP $i$ and EN $j$                        \\ \hline
\multicolumn{1}{|l|}{$\beta$}             & Delay penalty parameter                                \\ \hline
\multicolumn{1}{|l|}{$P_i$}               & Penalty parameter for unmet demand in area $i$         \\ \hline
\multicolumn{2}{|c|}{\textbf{Uncertainty-related parameters}}                                                         \\ \hline
\multicolumn{1}{|l|}{$\bar{\lambda}_i$}   & Nominal demand in area $i$                             \\ \hline
\multicolumn{1}{|l|}{$\tilde{\lambda}_i$} & Maximum demand deviation in area $i$                   \\ \hline
\multicolumn{1}{|l|}{$\lambda_i$}         & Actual demand in area $i$                              \\ \hline
\multicolumn{1}{|l|}{$z_j$}               & $\{0,1\}$, ``1'' if EN $j$ fails                       \\ \hline
\multicolumn{1}{|l|}{B}                   & Budget of the SP                                       \\ \hline
\multicolumn{1}{|l|}{K}                   & Node failure uncertainty budget                                    \\ \hline
\multicolumn{1}{|l|}{$\Gamma$}            & Demand uncertainty budget                              \\ \hline
\multicolumn{2}{|c|}{\textbf{Decision variables}}                                                           \\ \hline
\multicolumn{1}{|l|}{$x_{i,j}$}           & Workload from area $i$ to EN $j$    \\ \hline
\multicolumn{1}{|l|}{$y_j$}               & Amount of procured resource at EN $j$               \\ \hline
\multicolumn{1}{|l|}{$q_i$}               & Amount of unmet demand in area $i$                     \\ \hline
\multicolumn{1}{|l|}{$t_j$}               & $\{0,1\}$, ``1'' if the service is installed at EN $j$ \\ \hline
\end{tabular}
\label{notation}
\end{table}
\vspace{-0.2cm}
\subsection{Uncertainty Modeling}
Since the exact demand and node failures usually cannot be predicted accurately at the time of making the first-stage decisions, these  uncertainties need to be properly captured and integrated into the decision-making model of the SP. In RO, uncertain parameters are modeled by uncertainty sets, which represent an infinite number of scenarios. The uncertainty sets are usually constructed to balance between the robustness and conservativeness of the robust solution and to make the underlying optimization problem computationally tractable \cite{RObook}. Similar to our previous work \cite{duongiot}, we employ a polyhedral uncertainty set, which is  widely used in the RO literature \cite{RObook, ADR, ARO, CCG,wang20,Delage21,Pmedium}, to model the demand uncertainty.

Define $\lambda = ( \lambda_1, \lambda_2, \ldots, \lambda_I \big)$  and  $\bar{\lambda} = (\bar{\lambda}_1, \bar{\lambda}_2, \ldots, \bar{\lambda}_I)$ as the vector of  actual demands and  the vector of minimum  demands, respectively, in different areas.
Recall that $I$ is the number of areas (i.e., $I = | \mathcal{I} |$). Let $\tilde{\lambda}_i$ be the maximum deviation of the demand in area $i$ from $\bar{\lambda}_i$. Define $\tilde{\lambda} = (\tilde{\lambda}_1, \tilde{\lambda}_2, \ldots, \tilde{\lambda}_I  )$ as the vector of maximum demand deviations. Then, the actual demand $\lambda_i$ belongs to the range of $[\Bar{\lambda}_i, \Bar{\lambda}_i + \tilde{\lambda}_i]$. The polyhedral demand uncertainty set $\mathcal{D}$ can be expressed as follows \cite{CCG,wang20,Delage21,Pmedium}: 
\begin{align}
    \mathcal{D} =  \bigg\{ \lambda:~ \lambda_i = \Bar{\lambda}_i + g_i \tilde{\lambda}_i; ~ g_i \in [0,1], \forall i;  \sum_i g_i \leq \Gamma \bigg\}, \label{demand_uncertainty}
\end{align}
where  $\Gamma$ is called the demand uncertainty budget, which controls the conservative level of the robust solution \cite{duongiot,RObook,CCG,ADR,ARO}.
The uncertainty budget is set by the SP and can take any value between 0 and $I$. 
The demand uncertainty set enlarges as $\Gamma$ increases. If $\Gamma = I$, $ \mathcal{D}$ becomes a box uncertainty set. 
Without loss of generality, we assume that  $\Gamma$ is an integer.
If $\Gamma$  is a non-integer, the SP can round it up to the closest integer, which indeed slightly enlarges the uncertainty set $\mathcal{D}$ and makes the optimal solution more robust. Thus, restricting $\Gamma$ to be an integer is a mild assumption. 

Besides the demand uncertainty, the SP also faces unpredictable failures of ENs. 
To capture the node failure uncertainty in EC, we employ a cardinality-constrained uncertainty set, which is commonly used to describe discrete uncertainties \cite{uncertainty}. Let $z_j$ be a binary indicator that equals 1 if EN $j$ fails. The failure uncertainty set $\mathcal{Z}$   can be represented as:
\begin{align}
    \mathcal{Z} =  \bigg\{ z_j \in \{0,1\}^{|J|}: \sum_{j} z_{j} \leq K \bigg\}, \label{failure_uncertainty}
\end{align}
where the integer  $K$ expresses the maximum number of ENs that can fail at the same time. In other words, the proposed solution is robust against up to K simultaneous node failures. Obviously, $K \leq |\mathcal{J}| = J$.  As $K$ increases, the optimal solution becomes more robust. However, it is also more conservative (i.e., higher resource provisioning cost). If $K$ is set to zero, it implies that the SP does not consider potential node failures in its decision-making process. The SP can choose suitable values of $\Gamma$ and $K$ to control the level of robustness of the optimal solution. We use the following set $\Xi$ to jointly capture the uncertainties of demand and EN failures.
\begin{align}
\label{uncertainty}
    \Xi = \bigg\{ (\mathbb{\lambda},\mathbf{z}) \in \mathbb{R}_{+}^{|I|} \times \{0,1\}^{|J|}: \mathbb{\lambda} \in \mathcal{D}, \: z \in \mathcal{Z} \bigg\}.
\end{align}

\subsection{Problem Formulation}
We are now ready to describe the two-stage robust model which assists the SP to make resilient provisioning decisions while minimizing the cost and enhancing the user experience. The first-stage decision variables include service placement and  resource procurement decisions. In the second stage, the SP optimizes the actual workload allocation  after the uncertainties are disclosed. The two stages are coupled through the service placement and resource procurement variables.
The SP aims to minimize not only the resource provisioning cost, including the service placement  and edge resource procurement costs, but also the delay and unmet demand  penalty costs.

If the SP decides to place the service onto EN $j$ that does not have the service installed at the beginning of the scheduling period, the service needs to be downloaded from the cloud or a nearby EN, then installed at the EN. In this case, it incurs a cost $f_j$ for installing the service onto EN $j$. Clearly, if the service is available at EN $j$  at the beginning, this cost becomes zero. Let $l_j^0$ be a binary indicator that is equal to 1 if the service is initially available at EN $j$. The total service placement cost $\mathcal{C}^{\sf p}$ over all the ENs can be expressed as follows.
\beqn
    \mathcal{C}^{\sf p} =  \sum_{j \in \mathcal{J}} f_j (1 - l_{j}^{0}) t_j.
\eeqn

If the SP places the service at EN $j$, it needs to pay a storage cost  depending on the size of the service and the length of the scheduling period. Let $s_j$ denote the storage cost  at EN $j$. Then, the total storage cost $\mathcal{C}^{\sf s}$ is:
\beqn
\mathcal{C}^{\sf s} = \sum_{j \in \mathcal{J}} s_j t_j.
\eeqn

The edge resource cost at EN $j$ equals the amount of procured resource $y_j$ multiplied by the resource price $p_{j}$. Thus, the total computing resource cost $C^{\sf c}$ is:
\begin{align}
\mathcal{C}^{\sf c} = \sum_{j \in \mathcal{J}} p_j y_j.
\end{align}

The delay cost between area $i$ and EN $j$ is proportional to the amount of workload allocated from area $i$ to EN $j$ and the network delay between them. Thus, the total delay costs $\mathcal{C}^{\sf d} $ can be given as follows:
\begin{align}
    \mathcal{C}^{\sf d} = \beta \sum_{i \in \mathcal{J}} \sum_{j \in \mathcal{J}} d_{i,j} x_{i,j}.
\end{align}
Finally, the penalty for unmet demand in each area is proportional to the amount of unmet demand and the penalty for each unit of unmet demand in that area. Hence, the total penalty for unmet demand  $ \mathcal{C}^{\sf u}$ over all the areas is:
\begin{align}
    \mathcal{C}^{\sf u} = \sum_{i \in \mathcal{I}} P_i q_i.
\end{align}

Define $h_j = f_j (1 - l_{j}^{0}) + s_j$. The two-stage RO problem of the SP can be written as follows:

\begin{subequations}
\label{ARO_model}
\begin{align}
& \big(\mathcal{P}_1 \big):  \underset{y,~t}{\text{min}} ~\Bigg\{ \sum_{j}  p_{j} y_{j} + \sum_j h_j t_{j} \nonumber \\ 
&  \quad \quad \quad \quad + \underset{(\lambda,z) \in \Xi}{\text{max}} ~ \underset{x,q}{\text{min}} \: \sum_{i} P_{i} q_i + \beta \sum_{i,j}  d_{i,j}  x_{i,j} \Bigg\} \label{SPobj} \\
&\text{s.t.} \: \: \Omega_{1}(y,t) = \bigg\{ \sum_j  p_{j} y_{j} + \sum_j h_j t_{j} \leq B, \label{constr1}\\
& 0 \leq  y_{j} \leq C_j t_j, ~ \forall j; \quad  y_{j} \in \mathbb{Z}, ~ \forall j; t_j \in \{0,1\}, ~\forall j~ \bigg\}\label{constr_var1}\\
&\Omega_2(y,t,\lambda,z) =\bigg\{ \: \sum_{i}  x_{i,j} \leq y_{j} t_j (1 - z_j), \: \forall j \label{constr5}\\
& \sum_{j} x_{i,j} + q_i \geq \lambda_i , \: \forall i \label{constr6}\\
& x_{i,j} \leq a_{i,j} C_{j}, \: \forall i,j \label{constr7}\\
&  x_{i,j} \geq 0, \: \forall i,j; ~~ q_i \geq 0, \forall  i;  ~~ (\lambda, z) \in \Xi \label{constr_uncertainty} \bigg\}.
\end{align}
\end{subequations}
Problem  ($\mathcal{P}_1$) is indeed a trilevel optimization problem. The first level represents the problem of the SP before uncertainties are revealed, which seeks to minimize the SP's cost. The second level,  representing the worst-case realization of the uncertainties $\lambda$ and $z$,  tries to degrade the service quality (i.e., higher delay and more unmet demand) and maximize the SP's cost. The third level represents the optimal workload allocation problem to mitigate the effects of uncertainty realization.

The proposed two-stage robust model can be interpreted as follows. In the first stage, the SP minimizes the provisioning cost, considering the worst-case scenario of the uncertainty realization. The first-stage decisions need to be made before the uncertainties are disclosed. The second stage \textit{max-min} problem expresses the worst-case scenario.The two stages are interdependent through the service provisioning decision in the first stage. The delay penalty $\beta$ and unmet demand penalty $P_i$ reflect the SP's attitude towards the risk of demand fluctuation and node failures in the actual operating stage (i.e., the second stage). Larger values of  $\beta$ and $P_i$ indicate that the SP is more conservative and willing to pay more for the resource provisioning cost in the first stage to mitigate the risk of high delay and dropping requests. 

The set of constraints related to the first stage is captured by $\Omega_1(y,t)$.  Also, $\Omega_2(y,t,\lambda,z)$ expresses all constraints in the second stage. In particular, the budget constraint of the SP is presented in (\ref{constr1}). The provisioning cost cannot exceed the SP's budget. Note that $\beta$ and $P_i$ are used to control the delay and the amount of unmet demand. Intuitively, the provisioning cost tends to increase as $\beta$ and $P_i$ increase. The delay cost and unmet demand penalty cost are virtual costs that the SP  does not have to pay. The first inequalities in (\ref{constr_var1})  enforce that the total amount of resource allocated from each EN $j$ cannot exceed the amount of purchased resource $y_j$ at that node. Furthermore, the SP should purchase resources only from the ENs at which the service is installed (i.e., $t_j$ = 1). The purchased amount is limited by the capacity $C_j$ of the EN. The workload allocation decisions are non-negative, and the placement variables are binary variables, as shown in (\ref{constr_var1}). 

In the second stage, we can only allocate resources from ENs that have installed the service (i.e., $t_j = 1$) and are not in a failure state (i.e., $z_j = 0$). Moreover, the total amount of resource allocated from an EN $j$ cannot exceed the procured amount $y_j$ at that node in the first stage. These aspects are captured precisely by  (\ref{constr5}). Since $y_j$ serves as a parameter in the second stage, it can be observed that there exist bilinear terms $t_j(1-z_j)$ in the inner \textit{max-min} problem. To avoid this bilinear term, we propose to reformulate (\ref{constr5}) using the following equivalent linear constraints:
 \begin{align}
 \label{cap_equivalent}
    \sum_{i} x_{i,j} \leq C_j t_j (1 - z_j),~\forall j; \quad \sum_{i} x_{i,j} \leq y_j, ~\forall j.
\end{align}
Constraints (\ref{constr6}) represent that the resource demand in each area can either be served by some ENs or be dropped (i.e., $q_i$). The demand from area $i$ can be served by an EN $j$ only if $a_{i,j} = 1$, as shown in (\ref{constr7}). Finally, (\ref{constr_uncertainty}) enforces the feasible regions for the uncertainties and second-stage variables. 
To illustrate the impact of  delay requirements on the system performance, we assume that $a_{i,j}$ depends only on the delay $d_{i,j}$ between  area $i$ and EN $j$ and the maximum delay threshold $D^{\text{max}}$. Specifically, we have: 
\vspace{-0.18cm}
\begin{equation}
\label{avaliblity}
   a_{i,j} = \left\{
\begin{array}{ll}
      1, & d_{i,j} \leq D^{\text{max}} \\
      0, & d_{i,j} > D^{\text{max}} \\      
\end{array}, ~~  \forall i,j. \right. 
\vspace{-0.05cm}
\end{equation}
Equation  (\ref{avaliblity}) implies that for a delay-sensitive service, requests from each area should be served by ENs that are not too far from that area. Thus,  an EN $j$ can only serve user requests from area $i$ (i.e., $a_{i,j}= 1$)  if the delay between them is within the threshold $D^{\sf max}$.

\textit{Remark:} It is worth emphasizing that the main goal of our proposed model is to find the optimal service placement and resource procurement decisions, which need to be determined before the uncertainties are disclosed. 
The first-stage decisions $y$ and $t$ are robust against any uncertainty realization. In practice, the proposed system can be implemented as follows. The SP first solves the problem  ($\mathcal{P}_1$) to obtain the optimal values of $y^*$ and $t^*$. After observing the actual realization of the demand $\lambda^{\sf a}$ and failures $z^{\sf a}$, which is not necessarily the worst-case realization, the SP solves the following linear problem to obtain the optimal resource allocation decisions $x$ and $q$ in the actual operation stage:
\begin{subequations}
\label{actual}
\begin{align}
 \underset{x,~q}{\text{min}} \quad  & \sum_{i} P_{i} q_i + \beta \sum_{i,j}  d_{i,j}  x_{i,j} \\
\text{s.t.}: \quad &   \sum_{i}  x_{i,j} \leq y_j^* t_j^* (1 - z_j^{\sf a}), ~ \forall j \label{ac1}\\
& \sum_{j} x_{i,j} + q_i \geq \lambda_i^{\sf a} , \: \forall i \label{ac2}\\
& ~ 0 \leq x_{i,j} \leq a_{i,j} C_{j}, ~ \forall i,j; ~~ q_i \geq 0, \forall  i. \label{ac3}
\end{align}
\end{subequations}

\section{Solution Approaches}
\label{solution}

In this section, we present an exact and iterative solution as well as an affine decision rule (ADR) approximation approach to solving the formulated trilevel problem   ($\mathcal{P}_1$). First, we can show that ($\mathcal{P}_1$) can be written as a large-scale MILP by enumerating over the set of extreme points of the uncertainty set $\Xi$. Specifically, if the innermost minimization problem in ($\mathcal{P}_1$) is feasible, we can solve its dual maximization problem instead. Hence, we write the second-stage bilevel problem as a max-max problem, which is simply a maximization problem over $(\lambda, z)$, and dual variables associated with the constraints of the innermost problem. Because the resulting linear maximization problem is optimized over two disjoint polyhedra,  the optimal solution of the second-stage problem occurs at an extreme point of set $\Xi$. 

Let $\Xi^{\sf e} = \{\xi^1, \xi^2, \ldots, \xi^R \}$ be the set of $R$ extreme points of  $\Xi$, where $\xi^l = (\lambda^l,z^{l})$ is the $l$-th extreme point of  set $\Xi$. Note that $\lambda^l = (\lambda_1^l, \lambda_2^l, \ldots, \lambda_I^l)$ and $z^l = (z_1^l, z_2^l, \ldots, z_J^l)$. Then, problem ($\mathcal{P}_1$) is equivalent to:
\beqn
&& \underset{y,t \in \Omega_{1}(\mathbf{y,t}) }{\text{min}} \Bigg\{   \sum_{j}  p_{j} y_{j} + \sum_j h_j t_{j} \\  \nonumber
&&  + ~ \underset{(\lambda,z) \in \Xi^{\sf e}}{\text{max}} ~ \underset{(x,q) \in \Omega_2(y,t,\lambda,z)}{\text{min}}  \sum_{i} P_{i} q_i + \beta \sum_{i,j} d_{i,j}  x_{i,j} \Bigg\}. 
\eeqn
By enumerating all extreme points of $\Xi^{\sf e}$, it is easy to see that this problem is equivalent to the following MILP:
\begin{subequations}
\label{ARO1}
\begin{align}
\label{objaro}
&   \underset{y,t,\eta}{\text{min}} ~  \sum_j p_{j} y_{j} + \sum_j h_j t_j  + \eta \\
& \text{s.t.} \quad  \eta \geq \sum_i P_i q_i^{l} + \beta \sum_{i,j} d_{i,j}  x_{i,j}^{l}, \: \forall  l \leq R\\
& (x^l, \: q^l) \in \Omega_2(y,t,z^l,\lambda^l),~~ \forall l \leq R.  \label{aroc2}
\end{align}
\end{subequations}
When $\Xi$ contains a large number of extreme points, solving this MILP may be not practically feasible.
Instead of solving (\ref{ARO1}) for all extreme points in $\Xi$, we can try to solve this problem for a subset of  $\Xi^{\sf e}$. Clearly, this relaxed problem contains only a subset of constraints of the minimization problem (\ref{ARO1}). Thus, this relaxed problem gives us a lower bound (LB) for the optimal value of problem  (\ref{ARO1}). By gradually adding more constraints (i.e.,  more extreme points)  to the relaxed problem, we can improve the LB of the original problem ($\mathcal{P}_1$). 
This is indeed the core idea behind the CCG method \cite{CCG} that allows us to   solve ($\mathcal{P}_1$) without explicitly solving the problem (\ref{ARO1}).  

\subsection{ Column-and-Constraint Generation (CCG)}
 To solve  ($\mathcal{P}_1$), we first develop an iterative algorithm based on the  CCG method 
\cite{CCG} that decomposes the original two-stage robust problem into a master problem (\textit{MP}), which is a relaxation of the problem (\ref{ARO1}), and a bilevel max-min subproblem representing the second stage. The optimal value of the \textit{MP}
provides an LB, while the optimal
the solution to the subproblem helps us compute an upper bound (UB) for the optimal value 
of the original problem.  
Also, the optimal solution to the subproblem provides a significant extreme point that is used to update the \textit{MP}. The optimal solution $y$ and $t$ of the \textit{MP} is used to update the subproblem. By iteratively solving an updated \textit{MP} and a modified subproblem, the UB and LB are improved  after every iteration. Thus, CCG is guaranteed to converge to the optimal solution of the original problem in a finite number of iterations.

\subsubsection{Master Problem}
Initially, the \textit{MP} contains no extreme points. A new extreme point is added to the MP at every iteration. Thus, at iteration $r$, the \textit{MP} has $r$ extreme points and can be written as:
\begin{subequations}
\label{masterp}
\begin{align}
& \underset{y,t,\eta}{\text{min}} ~ \sum_{j} p_{j} y_{j} + \sum_j h_j t_j + \eta \label{MPobj} \\ 
& \text{s.t.} \quad (\ref{constr1}) - (\ref{constr_var1}) \nonumber\\
& \eta \geq \sum_i P_i q_i^{l} + \beta \sum_{i,j} d_{i,j} x_{i,j}^{l}, \forall l \leq r  \label{eq:eta}\\
& \sum_i x_{i,j}^l \leq y_j,  ~\forall j \\
& \sum_i x_{i,j}^l \leq C_j t_j (1 - z_j^{l,*}),  ~\forall j,l \leq r \\
& \sum_j x_{i,j}^l + q_i^l \geq \lambda_i^{l,*}, ~\forall  l \leq r\\
&  0 \leq x_{i,j}^{l} \leq a_{i,j} C_{j}, ~\forall i,j,l \leq r; ~~ q_i^l \geq 0, ~\forall i,l \leq r, \label{MPend}
\end{align}
\end{subequations}
where $\{ (\lambda^{1,*}, z^{1,*}), (\lambda^{2,*}, z^{2,*}), \ldots, (\lambda^{r,*}, z^{r,*}) \}$ is the set of optimal solutions to the subproblem in all previous iterations up to iteration $r$. Note that $\lambda^{l,*} = (\lambda_1^{l,*}, \lambda_2^{l,*}, \ldots, \lambda_I^{l,*})$ and $z^{l,*} = (z_1^{l,*}, z_2^{l,*}, \ldots, z_J^{l,*}),~\forall l$.
Clearly, the problem in (\ref{masterp}) is a MILP. The optimal solution to this \textit{MP} includes the optimal service placement ($t_j^{r+1,*}, \forall j$), resource procurement ($y_j^{r+1,*}, \forall j$), second-stage cost ($\eta^{r+1,*}$), as well as $x^{l,*}$ and $q^{l,*},~\forall l \leq r$.  The optimal service placement and resource procurement decisions $y^{*,r+1}$ and $t^{*,r+1}$  serve as input to the subproblem described in Section \ref{subprob}.
The \textit{MP} contains only a subset of constraints of the problem (\ref{ARO1}), which is equivalent to the original problem  ($\mathcal{P}_1$). Hence, the optimal value of the \textit{MP} is a LB for the optimal value of the original problem. The LB achieved after solving the \textit{MP} at iteration $r$ is:
\begin{align}
\label{eq:LBupdate}
    LB = \sum_j p_{j} y_{j}^{r+1,*} + \sum_j h_j t_j^{r+1,*} + \eta^{r+1,*}.
\end{align}

\subsubsection{Subproblem}
\label{subprob}
The subproblem is a bilevel max-min problem  representing the decision-making process of the SP in the second stage. Specifically, given the first-stage decisions $y$ and $t$, the subproblem is given as follows:
\begin{align}
\label{SPP}
\mathcal{Q}(y,t) =  \underset{ (\lambda,z) \in \Xi}{\text{max}} ~\underset{(x, q) \in \Omega_2(y,t,\lambda,z)}{\text{min}}  \sum_i P_i q_i + \beta \sum_{i,j} d_{i,j} x_{i,j}.
\end{align}
The inner  problem can be written explicitly as:
\begin{subequations}
\label{innermostp}
\begin{align}
& \underset{x,~q}{\text{min}}  ~~\sum_i P_i q_i + \beta \sum_{i,j} d_{i,j} x_{i,j} \label{iSPobj} \\  \label{iSPstart}
& \text{s.t.} ~ \sum_{i}  x_{i,j} \leq C_j t_j (1 - z_j),~ \forall j \quad &(u_j^1)\\
& \sum_{i}  x_{i,j} \leq y_j, \: \forall j \quad &(u_j^2) \\
& \sum_{j} x_{i,j} + q_i \geq \lambda_i , \: \forall i  \quad &(s_i) \\
& x_{i,j} \leq a_{i,j} C_{j}, \: \forall i,j  \quad    &(\pi_{i,j})    \\
&  x_{i,j} \geq 0, \: \forall i,j; ~~ q_i \geq 0, \forall  i, & \label{iSPend}
\end{align}
\end{subequations}
where $u_j^1$, $u_j^2$, $s_i$, and $\pi_{i,j}$ are dual variables associated with the corresponding constraints. Also, $t$ and $y$ in problem (\ref{innermostp}) are the optimal placement and procurement to the latest \textit{MP}. Thus, at iteration $r$, for the subproblem, we have  and $y_j = y_j^{r+1,*}$ and $t_j = t_j^{r+1,*} ,~\forall j$. It can be observed that problem (\ref{innermostp}) is feasible for any  uncertainty realization $(\lambda, z)$ as well as any values of $y$ and $t$ because ($x_{i,j} = 0, ~\forall i,j$, $q_i = \lambda_i, ~\forall i$) is always a feasible solution to this problem. Therefore, the second-stage problem satisfies the \textit{relatively complete recourse} condition that is required for CCG \cite{CCG} to work.  The relatively complete recourse condition implies that the second-stage problem is feasible for any given values of the uncertainty realization as well as the service placement and resource procurement computed by the \textit{MP}. 

The subproblem is a bilevel max-min problem, which is difficult to solve. To make it easier to follow the CCG method, we temporarily assume that there is an oracle that can output an optimal solution to the problem (\ref{SPP}) for any given values of $y$ and $t$. 
Let $(\lambda^{r+1,*}, z^{r+1,*}, x^{r+1,*}, q^{r+1,*})$ be an optimal solution to the subproblem at iteration $r$. Then, $z^{r+1,*}$ and $\lambda^{r+1,*}$ are used as input to the \textit{MP} in the next iteration.  Also, the UB for the optimal value of the original problem ($\mathcal{P}_1$) can be updated as follows:
\begin{subequations}
\label{UBu}
\begin{align}
    & UB^{r+1} = \sum_j p_{j} y_{j}^{r+1,*} + \sum_j h_j t_j^{r+1,*} + \sum_i P_i q_i^{r+1,*} \nonumber \\ 
    & + \beta \sum_{i,j} d_{i,j} x_{i,j}^{r+1,*}, \\
    & UB = \min \Big\{ UB, ~UB^{r+1} \Big\}. \label{UB2}
\end{align}
\end{subequations}

\subsubsection{CCG-based Iterative Algorithm}
Based on the description of the \textit{MP} and the subproblem above, we are now ready to present the iterative algorithm for solving 
the problem ($\mathcal{P}_1$) in a master-subproblem framework, as shown in \textbf{Algorithm \ref{CCGalg}}.

\begin{algorithm}[ht!]
\caption{CCG-based Iterative Algorithm}
\label{CCGalg}
\begin{algorithmic}[1]
\STATE Initialization:  set  $r=0$, $LB = -\infty$, and $UB =+\infty$. 
\REPEAT 
  \STATE Solve the  \textit{MP} in (\ref{masterp}) to obtain an optimal solution $(y^{r+1,*}, t^{r+1,*}, \eta^{r+1,*})$   and update LB according to (\ref{eq:LBupdate}).
  \STATE Solve the subproblem (\ref{SPP}) with $y = y^{r+1,*}$ and $t = t^{r+1,*}$ to obtain an extreme point ($\lambda^{r+1,*}$, $z^{r+1,*}$) and update UB following (\ref{UBu}).
  \STATE Update $\lambda^{r+1} = \lambda^{r+1,*}$ and $z^{r+1} = z^{r+1,*}$, which are used to create new cuts in the \textit{MP} in the next iteration. Also, update $r = r+1$. Go to Step 3.
\UNTIL {$\frac{UB -LB}{UB} \leq \epsilon$ }\\
\STATE 
Output: optimal placement and resource procurement decisions $(y^*, t^*)$.
\end{algorithmic}
\end{algorithm}

The CCG algorithm starts by solving  an \textit{MP} in Step 3  to find an optimal placement and procurement solution, which will serve as input to the subproblem in Step 4. The subproblem gives us a new extreme point, which represents the worst-case uncertainty scenario for the given optimal $y$ and $t$ in Step 3. This extreme point is used to generate new cuts (i.e., new constraints) to add to the \textit{MP} in the next iteration. Since new constraints related to the uncertainties $z$ and $\lambda$ are added to the \textit{MP} at every iteration, the feasible region of the \textit{MP} decreases. Hence, the LB is weakly increasing (i.e., improved) after each iteration. By definition in  (\ref{UBu}), the UB is non-increasing. Since $\Xi^{\sf e}$ is a finite set with $R$ elements while  the subproblem produces a new extreme point at every iteration, \textbf{Algorithm \ref{CCGalg}} will converge in a finite number of iterations. 

\begin{proposition}
\label{prop1}
\textit{\textbf{Algorithm \ref{CCGalg}} } \textit{converges to the optimal solution to the original problem ($\mathcal{P}_1$) in $O(R)$ iterations.}
\end{proposition}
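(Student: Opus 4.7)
The plan is to prove two claims: (i) \textbf{Algorithm \ref{CCGalg}} terminates in at most $R{+}1 = O(R)$ iterations, and (ii) at termination it delivers an optimal solution to $(\mathcal{P}_1)$. The backbone is the equivalence, already established in the paper, between $(\mathcal{P}_1)$ and the finite MILP in (\ref{ARO1}) whose constraints are indexed by the $R$ extreme points of $\Xi$. The MP in (\ref{masterp}) at iteration $r$ retains only the blocks indexed by the $r$ extreme points generated so far; hence it is a valid relaxation of (\ref{ARO1}), and appending a new extreme point at iteration $r{+}1$ only tightens it, so $LB_{r+1}\ge LB_r$.

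Next I would argue that the UB in (\ref{UBu}) is a valid upper bound on the optimal value $Z^*$ of $(\mathcal{P}_1)$ and is non-increasing. For any first-stage feasible $(y^{r+1,*},t^{r+1,*})$, the subproblem (\ref{SPP}) returns the worst-case uncertainty together with optimal recourse $(x^{r+1,*},q^{r+1,*})$, so the sum $\sum_j p_j y_j^{r+1,*}+\sum_j h_j t_j^{r+1,*}+\mathcal{Q}(y^{r+1,*},t^{r+1,*})$ is an upper bound on $Z^*$; the running minimum in (\ref{UB2}) then keeps $UB$ non-increasing.

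The crux is the distinctness-or-termination step: if the subproblem at iteration $r{+}1$ returns an extreme point coinciding with some previously generated $(\lambda^{l,*},z^{l,*})$ for $l\le r$, then $UB=LB$. Indeed, in the MP the recourse variables $(x^l,q^l)$ are jointly optimized with $(y,t)$ subject to exactly the same constraint structure as the inner minimization at $(\lambda^{l,*},z^{l,*})$. Minimizing $\eta$ in (\ref{MPobj}) subject to (\ref{eq:eta}) then forces $\eta^{r+1,*} = \max_{l\le r}\phi(y^{r+1,*},t^{r+1,*},\lambda^{l,*},z^{l,*})$, where $\phi$ denotes the optimal value of the inner problem (\ref{innermostp}). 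A repeated extreme point gives $\mathcal{Q}(y^{r+1,*},t^{r+1,*}) = \phi(y^{r+1,*},t^{r+1,*},\lambda^{l,*},z^{l,*}) \le \eta^{r+1,*}$, while the reverse inequality always holds because $\eta^{r+1,*}$ is a maximum over a subset of $\Xi^{\sf e}\subseteq\Xi$. Hence the two bounds coincide and the stopping criterion triggers.

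Combining these facts, every non-terminating iteration contributes a strictly new extreme point of the finite set $\Xi^{\sf e}$, so at most $R$ such points can be generated before repetition must occur; the algorithm therefore halts within $O(R)$ iterations with $LB=UB=Z^*$, certifying optimality of the returned $(y^*,t^*)$ for $(\mathcal{P}_1)$. The main obstacle is the distinctness-or-termination argument: it rests on carefully exploiting the fact that the MP embeds a full copy of the second-stage problem for every previously sampled vertex, so the gap closes the instant the oracle fails to produce a new vertex, and it is also where one must verify that the minimization of $\eta$ effectively drives it to the pointwise maximum of the recourse-cost functions evaluated at the sampled uncertainties.
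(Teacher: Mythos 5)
Your proof is correct and follows essentially the same route as the paper's: the master problem is a relaxation yielding a non-decreasing lower bound, the subproblem value certifies an upper bound, a repeated extreme point forces $LB=UB$ because the master problem already embeds the full recourse problem for that vertex, and finiteness of $\Xi^{\sf e}$ bounds the iteration count by $O(R)$. The only cosmetic difference is that you phrase the gap-closing step through the value functions $\phi$ and $\mathcal{Q}$ (showing $\eta^{r+1,*}$ equals the pointwise maximum of the sampled recourse costs), whereas the paper compares the explicit optimal solutions $(x^l,q^l)$ and $(x^*,q^*)$ via constraint (\ref{eq:eta}); both arguments are sound and equivalent.
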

\begin{proof}
Please refer to \textit{Appendix \ref{proofcon}} \cite{arxiv22}.
\end{proof}

Overall, \textbf{Algorithm \ref{CCGalg}} converges to the optimal solution within a finite number of iterations depending on the number of extreme points  $R$ of the uncertainty set \textbf{$\Xi^{\sf e}$}. 

\subsubsection{Duality-based Reformulation for the Subproblem} 
\textbf{Algorithm \ref{CCGalg}} requires solving a bilevel max-min subproblem at every iteration. In the previous section, we assume that there exists an oracle for solving the subproblem (\ref{SPP}). In the following, we present an efficient approach to implementing this oracle. In the original CCG paper \cite{CCG}, the authors utilize the  KKT conditions to transform the bilevel subproblem into an equivalent MILP. 
The KKT-based reformulation of the subproblem is presented in \textit{Appendix} \ref{kkta} \cite{arxiv22}. 
Instead of the KKT-based reformulation as in \cite{CCG},  we propose an alternative approach that converts the   subproblem to a MILP by using LP duality. Compared to the KKT-based reformulation, the duality-based reformulation has the advantage of generating fewer variables and constraints, which reduces computational time, especially for large-scale systems. 

By using LP duality \cite{lpbook}, we first write the dual problem of 
(\ref{innermostp}) 
and then, the subproblem (\ref{SPP}) becomes a max-max problem (i.e., simply a maximization problem). Hence,  the subproblem (\ref{SPP}) is equivalent to:
\begin{subequations}
\label{dualsub}
\begin{align}
\label{NL-optsub}
& \: \underset{u^1,u^2,s,\pi,g,z}{\text{max}} \: \sum_{i} \Bar{\lambda}_{i} s_i + \sum_{i} \tilde{\lambda}_i s_i g_i - \sum_j C_j t_j (1 - z_j) u^{1}_j   \nonumber \\
& - \sum_j y_j u^2_j - \sum_{i,j} a_{i,j} C_{i,j} \pi_{i,j} \\
& \text{s.t.} \quad  s_i \leq P_i , \quad \forall i \\
&  s_i - u^1_j - u^2_j - \pi_{i,j}  \leq \beta d_{i,j}, \quad \forall i,j\\
&  u^{\sf 1}_{j},~  u^{\sf 2}_{j}\geq 0, \: \forall j; \quad s_{i} \geq 0, \quad \forall i \\
& \sum_j z_j \leq K;~~ z_j  \in \{0,1\}, \quad \forall j\\
& \sum_{j} g_i \leq \Gamma; \quad0 \leq g_i  \leq 1, \quad \forall i. \end{align}
\end{subequations}
Note that $\lambda = \Bar{\lambda}_{i} + \tilde{\lambda}_i g_i$ from (\ref{demand_uncertainty}). Also, $y$ and $t$ are parameters in problem ($\ref{dualsub}$). Due to the bilinear terms $s_i g_i$ and $z_j u_j^1$, ($\ref{dualsub}$) is a non-linear optimization problem. Since the term $z_j u_j^1$ is a product of a binary variable and a continuous variable, we can linearize it as follows. Define $U_j = z_j u_j^1$. Then, the bilinear term $z_j u_j^1$  can be replaced by $U_j$ and the following linear equations: $ U_{j} \leq u^1_j, U_{j} \leq M z_{j}, 0 \leq U_{j} \geq u^1_j - M (1 - z_{j}), ~ \forall j$, where $M$ is a sufficiently large number \cite{BigM}.

Although $g_i$'s are continuous,  it is easy to see that (\ref{dualsub}) always has an optimal solution  where $g_i \in \{0,1\},~\forall i$, because $\Gamma$ is an integer. The bilinear term $s_i g_i$ becomes a product of a continuous variable and a binary variable. Therefore, we can linearize  $s_i g_i$ using a set of equivalent linear equations as we did for $z_j u_j^1$. Let $v_{i} = s_{i} g_i,~\forall i$. Then, instead of solving the non-linear problem ($\ref{dualsub}$), we can solve the following MILP:
\begin{subequations}
\label{dualsubmilp}
\vspace{-0.2cm}
\begin{align}
\label{opt_sub}   
& \underset{u^1,u^2,s,\pi,g,z}{\text{max}}  ~\sum_{i} \Bar{\lambda}_i s_i + \sum_{i} \tilde{\lambda}_i v_{i} + \sum_j C_j t_j U_j  \nonumber\\
& - \sum_j C_j t_j u^1_j - \sum_{j} y_j u^2_j - \sum_{i,j} a_{i,j} C_{j} \pi_{i,j} \\
& \text{s.t.} \quad  s_i \leq P_i, ~\forall i;~~ s_i - u_j -\pi_{i,j} \leq \beta d_{i,j}, ~ \forall i,j\\ 
& v_{i} \leq s_{i}, ~v_{i} \leq M g_{i},~ v_{i} \geq s_{i} - M (1 - g_i), ~ \forall i\\
& U_{j} \leq u^1_j, ~~ \forall j;~~ U_{j} \leq M z_{j}, ~~ \forall j\\
& 0 \leq U_{j} \geq u^1_j - M (1 - z_{j}), ~ \forall j \\
& u^1_{j}, u^2_j \geq 0,~ \forall j; ~~ v_i, s_{i} \geq 0. \: \forall i; ~~  \pi_{i,j} \geq 0, \: \forall i,j \label{linear2}\\
& \sum_j z_j \leq K; \: z_j \in \{0,1\}, ~\forall j \\
& \sum_i g_i \leq \Gamma; \: g_i \in \{0,1\}, \: \forall i.
\end{align}
\end{subequations}

When we use the duality-based transformation to solve the subproblem (\ref{SPP}), instead of (\ref{UBu}), we can update the UB as: 
\begin{subequations}
\label{UBuu}
\begin{align}
& UB^{r+1} =   \sum_j p_{j} y_{j}^{r+1,*} + \sum_j h_j t_j^{r+1,*} + \sum_{i} \Bar{\lambda}_i s_i^* + \sum_{i} \tilde{\lambda}_i v_{i}^*  \nonumber \\  
&  + \sum_j C_j t_j U_j^* - \sum_j C_j t_j u^{1,*}_j - \sum_{j} y_j u^{2,*}_j - \sum_{i,j} a_{i,j} C_{j} \pi_{i,j}^* \label{UB1u}\\
& UB = \min \Big\{ UB, ~UB^{r+1} \Big\}, \label{UB2u} 
\end{align}
\end{subequations}
where $(s^*, v^*, U^*, u^{\sf 1,*}, u^{\sf 2, *}, \pi^*)$ is an optimal solution to  (\ref{dualsubmilp}).

\vspace{-0.1in}
\subsection{Affine Decision Rule (ADR) Approach}
\label{ADRsec}
Although \textbf{Algorithm \ref{CCGalg}}   can give an optimal solution to  ($\mathcal{P}_1$)  within a finite number of iterations, its computational time depends on the uncertainty set. From \textit{Proposition \ref{prop1}}, the computational time is sensitive to the number of extreme points of the uncertainty set $\Xi^{\sf e}$, i.e., the number of elements of $\Xi^{\sf e}$. In the worst case, it can take a long time to converge if the uncertainty set has a huge number of extreme points.
Hence, we propose an affine decision rule (ADR) approximation method \cite{ARO, ADR, ADR11}, which is insensitive to the set size, to solve large-scale problem instances. The main idea behind ADR is to restrict the second-stage variables 
to be affine functions of the uncertain data. If these functions are given, we can simply use them to compute the suboptimal recourse decisions for any given realization of the uncertainties. Thus, the goal of  ADR  is to find reasonable functions to approximate the optimal solution. 
 While ADR only provides a suboptimal solution, Kuhn \textit{et al.} showed that ADR performs surprisingly well on many 
 problems and it is even optimal in certain problem classes \cite{ADR11}. This motivates us to examine the performance of ADR for our two-stage robust 
problem  ($\mathcal{P}_1$). 

Let $q_i(\lambda, z)$ and $x_{i,j}(\lambda, z)$ express $q_i$ and $x_{i,j}$ as functions of the uncertainties $\lambda$ and $z$. Then, the original problem ($\mathcal{P}_1$) can be written as follows:
\begin{subequations}
\label{ARCp}
\begin{align}
& (\mathcal{P}_1'): \:  \underset{y,t}{\text{min}} \sum_{j}  p_{j} y_{j} + \sum_j h_j t_{j} \nonumber \\ 
& + \underset{(\lambda,z) \in \Xi}{\text{max}}  \underset{x,q}{\text{min}} \: \sum_{i \in I} P_{i} q_i(\lambda,z) + \beta \sum_{i,j} d_{i,j}  x_{i,j}(\lambda,z)  \\
&\text{s.t.} \: \: y, t \in \Omega_{1}(y,t) \label{constrADR1}\\
& \sum_{i}  x_{i,j}(\lambda,z) \leq C_{j} t_j (1 - z_j), ~  \forall j, \:\forall (\lambda,z) \in \Xi \label{constrADR2}\\
& \sum_{i}  x_{i,j}(\lambda,z) \leq y_{j}, \: \forall j, \: (\lambda,z) \in \Xi \label{constrADR3}\\
& \sum_{j} x_{i,j}(\lambda,z) + q_i(\lambda,z)  \geq \lambda_i, \: \forall i,\: (\lambda,z) \in \Xi \label{constrADR4}\\
& 0 \leq x_{i,j}(\lambda,z) \leq a_{i,j}C_j, \: \forall i,j, \: (\lambda,z) \in \Xi \label{constrADR5}\\
& q_i(\lambda,z) \geq 0, \: \forall i; \: (\lambda,z) \in \Xi. \label{constrADR6}
\end{align}
\end{subequations}

In  ADR, the second-stage variables $x_{i,j}$ and $q_i$ are defined as affine functions of the uncertainties $\lambda$ and $z$. Thus:
\begin{align}
&    x_{i,j}(\lambda,z) = \sum_{e \in \mathcal{I}} A_{i,j}^{e} \lambda_e + \sum_{l \in \mathcal{J}} B_{i,j}^l z_{l} + D_{i,j}, \: \forall i, j  \label{ADR1} \\
 &   q_{i}(\lambda,z) = \sum_{e \in \mathcal{I}} E_{i}^{e} \lambda_e + \sum_{l \in \mathcal{J}} F_{i}^l z_{l} + G_i, ~\forall i, \label{ADR2} 
\end{align}
where $e$ is area index and $l$ is EN index. Also, $A_{i,j}^{e}$, $B_{i,j}^l$, $D_{i,j}$, $E_{i}^{e}$, $F_{i}^l$ and $G_i \in \mathbb{R}$. It can be observed that, for each realization of the uncertain data $\lambda$ and $z$, we can readily compute the value of $w$ and $q$ by using (\ref{ADR1}) and (\ref{ADR2}). Thus, the objective of the ADR approach is to optimize the coefficients $A_{i,j}^{e}$, $B_{i,j}^l$, $D_{i,j}$, $E_{i}^{e}$, $F_{i}^l$ and $G_i$ in (\ref{ADR1}) and (\ref{ADR2}).

By using the ADR (\ref{ADR1}) and (\ref{ADR2}) and the epigraph form for  $(\mathcal{P}_1')$, we obtain the following ADR model (\ref{ADRp2}) for $(\mathcal{P}_1)$:
\begin{subequations}
\label{ADRp2}
\begin{align}
& (\mathcal{P}_1^{\textbf{ADR}}): \underset{\begin{subarray}{c} y,t,\\A,B,D,E,F,G\end{subarray}}{\text{min}} \sum_{j}  p_{j} y_{j} + \sum_{j} h_j t_{j} + \phi \\
&\text{s.t.} \: \: y, t \in \Omega_{1}(y,t) \label{constr_ARC}\\ \label{constr_ADR1}
& \phi \geq  \sum_{i} P_{i} \Big(\sum_{e} E_{i}^{e} \lambda_e + \sum_{l} F_{i}^l z_{l} + G_i \Big) + \beta \sum_{i,j} d_{i,j}  \nonumber  \\ 
& \Big(  \sum_{e} A_{i,j}^{e} \lambda_e + \sum_{l} B_{i,j}^l z_{l} + D_{i,j} \Big), ~ (\lambda,z) \in \Xi \\
& \sum_{i} \Big( \sum_{e} A_{i,j}^{e} \lambda_e + \sum_{l} B_{i,j}^l z_{l} + D_{i,j} \Big) \leq C_j t_j (1 - z_j), \nonumber \\
& ~ \forall j, ~ (\lambda,z) \in \Xi  \label{constr_ADR2}\\
& \sum_{i} \Big( \sum_{e} A_{i,j}^{e} \lambda_e + \sum_{l} B_{i,j}^l z_{l} + D_{i,j} \Big) \leq y_j, ~ \forall j, ~  (\lambda,z) \in \Xi  \label{constr_ADR3}\\
& \sum_{j} \Big(\sum_{e} A_{i,j}^{e} \lambda_e + \sum_{l} B_{i,j}^l z_{l} + D_{i,j} \Big) + \Big(\sum_{e} E_{i}^{e} \lambda_e  \nonumber \\
& + \sum_{l} F_{i}^l z_{l} + G_i \Big) \geq \lambda_i, \: \forall i,~  (\lambda,z) \in \Xi   \label{constr_ADR4}\\
& \sum_{e} E_{i}^{e} \lambda_e + \sum_{l} F_{i}^l z_{l} + G_i \geq 0, ~ \forall i, ~(\lambda,z) \in \Xi  \label{constr_ADR5} \\
& \sum_{e} A_{i,j}^{e} \lambda_e + \sum_{l} B_{i,j}^l z_{l} + D_{i,j} \geq 0, \forall i,j,  (\lambda,z) \in \Xi  \label{constr_ADR6}\\ 
& \sum_{e} A_{i,j}^{e} \lambda_e + \sum_{l} B_{i,j}^l z_{l} + D_{i,j} \leq a_{i,j} C_j, \forall i,j, ~ (\lambda,z) \in \Xi. \label{constr_ADR7}
\end{align}
\end{subequations}
To solve the ADR model (\ref{ADRp2}), we first need to convert each robust constraint into a solvable form. Specifically, we employ LP duality \cite{lpbook} to reformulate each robust constraint into an equivalent set of linear equations. Due to space limitation, 
please refer to \textit{Appendix \ref{RC_reformulation}} \cite{arxiv22} for more details. After converting each robust constraint into a set of linear equations, we obtain the following MILP:
\begin{align}
& \underset{\begin{subarray}{c} y,t\\A,B,D, E,F,G\end{subarray}}{\text{min}} \sum_{j}  p_{j} y_{j} + \sum_{j} h_j t_{j} +　\phi \nonumber \\
&\text{s.t.} ~~ y ,t \in \Omega_{1}(y,t) \label{MILP_ADR}\\ 
& \quad \quad (\ref{ADRconstraint1}) - (\ref{ADRconstraint7})  ~\text{in} ~\textit{Appendix \ref{RC_reformulation} \cite{arxiv22}}.
\end{align}

Although the ADR approach only gives us a suboptimal solution, our numerical results show that the ADR solution is quite close to the exact optimal solution. Thus, it provides a good approximation scheme for our problem. Furthermore, for some special cases, we can show that the ADR approach generates an exact optimal first-stage decision for the original problem $(\mathcal{P}_1)$. Note that we  only care about the first-stage decisions because in the operation stage, the SP will eventually reoptimize the second-stage decisions by solving LP problem (\ref{actual}) after the uncertainties are revealed.

We have the following proposition in regard to the optimality of the ADR approach.
\begin{proposition}[Optimality of ADR \cite{ARO}] \label{proof_opt_ADR}
    If the uncertainty set $\Xi$ is a simplex (i.e., the convex hull of its vertices), the affine policy can give an optimal first-stage solution. 
\end{proposition}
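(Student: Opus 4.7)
The plan is to show that when $\Xi$ is a simplex the supremum in problem $(\mathcal{P}_1^{\textbf{ADR}})$ can be achieved by an affine policy whose worst-case value coincides with the optimal value of the exact problem $(\mathcal{P}_1)$, so that any first-stage optimizer of $(\mathcal{P}_1)$ is also optimal for $(\mathcal{P}_1^{\textbf{ADR}})$ and vice versa. The structural fact I would exploit is the well-known property of simplices: a simplex $\Xi \subset \mathbb{R}^n$ with $n+1$ affinely independent vertices $\xi^1,\ldots,\xi^{n+1}$ admits, for every $\xi \in \Xi$, a \emph{unique} barycentric representation $\xi = \sum_{k=1}^{n+1}\alpha_k(\xi)\,\xi^k$ with $\alpha_k(\xi)\ge 0$ and $\sum_k\alpha_k(\xi)=1$, where each $\alpha_k(\cdot)$ is itself an affine function of $\xi$. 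This uniqueness is what makes the simplex special and is exactly what fails for generic polytopes.

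Next I would fix any optimal first-stage pair $(y^*,t^*)$ of $(\mathcal{P}_1)$. Because the inner LP (\ref{innermostp}) enjoys relatively complete recourse (as already noted in the paper), for every vertex $\xi^k=(\lambda^k,z^k)$ of $\Xi$ I can select a second-stage optimizer $(x^k,q^k)$. I then \emph{interpolate} these vertex solutions to define a candidate recourse policy
\begin{equation*}
\bigl(x(\xi),q(\xi)\bigr) \;:=\; \sum_{k=1}^{n+1}\alpha_k(\xi)\,(x^k,q^k).
\end{equation*}
Since the $\alpha_k(\cdot)$ are affine in $\xi=(\lambda,z)$, the resulting map is affine in $\xi$, so it lies in the class of policies (\ref{ADR1})--(\ref{ADR2}); reading off the coefficients gives feasible values of $A,B,D,E,F,G$.

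I would then verify feasibility and optimality of this policy. For feasibility, every second-stage constraint in (\ref{constrADR2})--(\ref{constrADR6}) is linear in $(x,q,\lambda,z)$ once $(y^*,t^*)$ is fixed; in particular the right-hand side $C_j t_j^*(1-z_j)$ in (\ref{constrADR2}) is affine in $z$ because $t_j^*$ is a constant. Hence the constraint evaluated at $\xi=\sum_k\alpha_k\xi^k$ with the candidate $(x(\xi),q(\xi))$ is a convex combination of the same constraint evaluated at each vertex, each of which holds by optimality of $(x^k,q^k)$ for $\xi^k$; convex combinations of valid linear inequalities remain valid. For optimality, the objective $\sum_i P_i q_i + \beta\sum_{i,j} d_{i,j}x_{i,j}$ is linear, so its maximum over $\Xi$ is attained at a vertex, giving
\begin{equation*}
\max_{\xi\in\Xi}\Bigl[\sum_i P_i q_i(\xi)+\beta\sum_{i,j}d_{i,j}x_{i,j}(\xi)\Bigr]=\max_{k}\Bigl[\sum_i P_i q_i^k+\beta\sum_{i,j}d_{i,j}x_{i,j}^k\Bigr],
\end{equation*}
and the right-hand side is exactly the worst-case recourse value of $(\mathcal{P}_1)$ at $(y^*,t^*)$ by definition of $(x^k,q^k)$. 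Therefore $(\mathcal{P}_1^{\textbf{ADR}})$ achieves the same objective value as $(\mathcal{P}_1)$ at $(y^*,t^*)$, and since ADR is a restriction, its optimal value cannot be smaller; equality then certifies that $(y^*,t^*)$ is first-stage optimal under the affine policy as well.

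The main obstacle I anticipate is purely notational rather than conceptual: one must check that the interpolation respects the mixed continuous/discrete structure of $\Xi$. Strictly speaking, the binary set $\mathcal{Z}$ in the paper's original $\Xi$ is not a simplex, so the proposition is invoked in the hypothetical regime where $\Xi$ is replaced by (or happens to coincide with) a genuine simplex; once that hypothesis is granted, all variables in $(x(\xi),q(\xi))$ and all constraints are evaluated on the convex hull, and the barycentric argument above is the one given by Bertsimas and Goyal (the cited reference \cite{ARO}). The only subtlety worth highlighting in the writeup is the uniqueness of $\alpha_k(\xi)$—it is this uniqueness that guarantees the interpolating policy is a \emph{well-defined} affine function of $\xi$ and not merely a selection; without it (for polytopes with more vertices than dimension plus one), the construction would not produce an affine rule.
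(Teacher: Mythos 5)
Your argument is correct and is exactly the barycentric-interpolation proof of Bertsimas and Goyal that the paper imports by citation from \cite{ARO} without reproving: interpolate vertex-optimal recourse solutions via the (unique, affine) barycentric coordinates of the simplex, check feasibility by convexity of the linear constraints, and note that a linear objective composed with an affine policy attains its maximum over $\Xi$ at a vertex. The only step you lean on implicitly is that the \emph{exact} worst case $\max_{\xi\in\Xi}\mathcal{Q}(y^*,t^*,\xi)$ is also attained at a vertex, which the paper establishes separately via LP duality of the inner problem, and your closing caveat about the binary set $\mathcal{Z}$ not literally being a simplex correctly matches how the paper applies the proposition only in the degenerate cases (e.g., $K=0$, $\Gamma=1$) where $\Xi$ reduces to one.
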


From \textit{Proposition \ref{proof_opt_ADR}}, we can show that there exist some special cases where the ADR approach gives an optimal solution to the original problem $(\mathcal{P}_1)$. For example, consider the case where $K = 0$ and $\Gamma = 1$. In other words, the SP considers only demand uncertainty, and set $\Xi$ is the same as set $\mathcal{D}$. Also, $\sum_{i \in \mathcal{I}} g_i \leq 1$. Let $g_0 \geq 0$ be a slack variable such that $g_0 + \sum_{i \in \mathcal{I}} g_i = 1$. We consider the following $I + 1$ vertices of $\Xi$ (i.e., $\mathcal{D}$): $(\bar{\lambda}_1,\bar{\lambda}_2,\dots,\bar{\lambda}_I )$, $(\bar{\lambda}_1 + \tilde{\lambda}_1,\bar{\lambda}_2,\dots,\bar{\lambda}_I )$, $(\bar{\lambda}_1 ,\bar{\lambda}_2 + \tilde{\lambda}_2,\dots,\bar{\lambda}_I )$, $\dots$, $(\bar{\lambda}_1 ,\bar{\lambda}_2,\dots,\bar{\lambda}_I + \tilde{\lambda}_I)$. Recall from (\ref{demand_uncertainty}), we have $\lambda_i = \bar{\lambda}_i + g_i \tilde{\lambda}_i,~\forall i$. Since $g_0 + \sum_{i \in \mathcal{I}} g_i = 1$, it is easy to see that:
\beqn
 \lambda &=& (\bar{\lambda}_1 + g_1 \tilde{\lambda}_1, \bar{\lambda}_2 + g_2 \tilde{\lambda}_2, \ldots, \bar{\lambda}_I + g_I \tilde{\lambda}_I)   \\ \nonumber
&=& g_0 (\bar{\lambda}_1,\bar{\lambda}_2,\dots,\bar{\lambda}_I ) +  \sum_{i \in \mathcal{I}} g_i (\bar{\lambda}_1, \ldots ,\bar{\lambda}_i + \tilde{\lambda}_i,\ldots,\bar{\lambda}_I ).
\eeqn
Hence, the uncertainty set $\Xi$ is a simplex in this case. As a result, from \textit{Proposition \ref{proof_opt_ADR}}, ADR gives an optimal solution to problem  $(\mathcal{P}_1)$  when $K = 0$ and $\Gamma = 1$. By following a similar procedure, we can also show that ADR gives an optimal solution to problem  $(\mathcal{P}_1)$  when $K = 1$ and $\Gamma = 0$.

It can be seen from  (\ref{ADRp2}),  ADR is not sensitive to the characteristics of the uncertainty set since the robust constraints are dualized. However, the network size  significantly affects the  ADR reformulation's complexity. Table \ref{table:SizeComparision} presents  the number of constraints and variables in the ADR reformulation, which depends on the network size. 

\begin{table}[h]
\centering
\begin{tabular}{|c|c|}
\hline
\# constraints      & $IJ(4I+4J+11) + 4I(I + 1) +3J(J + 4) + 5$ \\ \hline
\# variables & $IJ(2I+2J+13) + I(3I+J+3) + J(2J+7)$ \\ \hline
\end{tabular}
\caption{The size of the ADR reformulation.}
\label{table:SizeComparision}
\vspace{-0.2cm}
\end{table}

\section{Numerical Results}
\label{sim}
\subsection{Simulation Setting}
\label{setting}
We consider an edge system comprising 20 areas, with each area having an EN (i.e., $I = |\mathcal{I}|  = |\mathcal{J}| = J = 20$). We will also perform sensitivity analysis on larger networks with more than 20 areas. Since we are not aware of any public data for edge networks, similar to \cite{Jia17, Jia18, duongiot, duongtcc}, we adopt the popular Barabasi-Albert (BA) model to generate  a random scale-free edge network topology with 100 nodes. We extract a subset of nodes out of these 100 nodes to generate the edge network topology. The link delay between two adjacent nodes on the BA network is  generated randomly within the range of $[2,~10]$ \textit{ms} \cite{duongiot}. Then, the network delay between an AP $i$ and EN $j$ is the delay of the shortest path between them.  By employing Dijkstra's shortest path algorithm, we can calculate the network delay between air pair of nodes. In the \textit{default setting}, we assume that all ENs are eligible to serve demand from every area, i.e., $a_{i,j}$ is set to be 1, $\forall i,j$. In Fig.~\ref{fig:delay_threshold}, we will vary the values of $D^{\sf max}$ and $a_{i,j}$ when studying the impact of the delay requirements on the system performance.

Using the hourly price of the \textit{m5d.xlarge} Amazon EC2 instance \cite{EC2} as a reference, the unit resource prices at the ENs are randomly generated from $\$0.02$ to $\$0.06$ per vCPU-hour. Also, the capacities ($C_j, \forall j$) of the ENs are set randomly among $32$, $48$, and $64$ vCPUs. The service placement and storage costs ($h_j$) are randomly generated  between $\$0.1$ and $\$0.2$. The budget ($B$) of the SP is set to be $20$. We also assume that the service is not available on any EN at the beginning (i.e., $l_j^0 = 0, \forall j)$. By analyzing the  trace in \cite{TUDelft},  we  randomly generate the nominal demand in each area between 5 and 40. Also, 
define $\alpha$ as the ratio between maximum demand deviation $\tilde{\lambda}_i$ and the nominal demand $\bar{\lambda}_i$ (i.e., $\tilde{\lambda}_i = \alpha \bar{\lambda}_i, \forall i$). 
In the default setting, $\Gamma = 5$,  $K = 2$,  $\beta = 0.1$, $\alpha = 0.6$, $B = 20$, and $P_i = P = 0.5, \forall i$. We also vary these parameters during sensitivity analyses. The main simulation data is summarized in Table \ref{table:parameter_setting}. All the experiments are conducted in
MATLAB using CVX (http://cvxr.com/cvx/) 
and Gurobi (https://www.gurobi.com/) 
on a  laptop with an Intel Core i7-11700KF CPU and 16GB of RAM. 

\begin{table}[H]
\centering
\begin{tabular}{|l|l|}
\hline
\textbf{Parameters} & \textbf{Values}          \\ \hline
Network size (AP, EN): $(I,J)$ & $(20,20)$          \\ \hline
Unit resource price  & $[0.02,0.06]$ (\$ per hour)\\ \hline
Resource capacity ($C_j, \forall j$)  & $\{32,48,64\}$   vCPU     \\ \hline
Service placement and storage cost ($h_j, \forall j$) & $[0.1, 0.2]$ (\$)\\ \hline
Delay penalty ($\beta$)  & $0.1$ (\$ per  ms)\\ \hline
Budget ($B$) & $20$ (\$) \\ \hline
Unmet demand penalty ($P_i, \forall i$) & $0.5$ (\$ per vCPU) \\ \hline
Uncertain budget ($\Gamma, K$) & $\Gamma = 5$, $K = 2$ \\ \hline 
\end{tabular}
\caption{Simulation data}\label{table:parameter_setting}
\vspace{-0.2cm}
\end{table}

\subsection{Performance Evaluation}
\label{operationCompare}
First, we compare the performance of the proposed two-stage robust (\textbf{ARO)} model $(\mathcal{P}_1)$ with three benchmarks:
\begin{enumerate}
    \item \textit{Deterministic model} (\textbf{DET}): see \textit{Appendix \ref{DetermiA}} \cite{arxiv22}. In this model, the SP uses the forecast demand and does not consider EN failures when making decisions.\label{benchmark2}

    \item \textit{Two-stage stochastic model} (\textbf{SO}): see \textit{Appendix \ref{SOappen}} \cite{arxiv22}. This model aims to optimize the expected system performance over a  set of scenarios generated from historical data or a certain probability distribution.

    \item \textit{Heuristic} (\textbf{HEU}): this scheme uses the forecast demand to make the service placement and workload allocation decision. The SP first sorts the areas based on their demands. Then, the workload in each area is allocated one by one, starting from the highest demand area. For each area, the service is placed onto its closest EN and the whole demand of the area is assigned to this EN if this EN has sufficient capacity. Otherwise, a portion of the demand is assigned to this EN and the remaining is allocated to the second-closest EN. This process is repeated until the demand is fully allocated. The usable capacity of each EN is updated during the process. \label{benchmark3}
\end{enumerate}

For the SO scheme, we generate the demand scenarios from
a truncated multivariate normal distribution \cite{duongiot}. These (training) scenarios are used as input to the SO model shown in  \textit{Appendix \ref{SOappen}} \cite{arxiv22}. The output of the SO model is the optimal service placement and resource procurement solution (t, y). Similarly, we solve the deterministic model in \textit{Appendix \ref{DetermiA}} \cite{arxiv22}, the ARO model ($\mathcal{P}_1$), and run the heuristic scheme to find different optimal service placement and resource procurement solutions. These solutions are computed before the actual operation stage. 
For all four schemes, given the service placement and resource procurement decisions in the first stage, after observing the actual realization of the demand and the EN failures, the SP will solve the linear problem (\ref{actual}) to determine the actual workload allocation and unmet demand decisions. The \textit{actual total cost} of each scheme is the total of the provisioning cost in the first stage and the actual cost in the operation stage (i.e., second stage).

To compare the performance of these schemes, we use Monte-Carlo simulation and generate 1000 (testing) scenarios to represent the demand and failures in the actual operation stage. Specifically, the actual demand is generated from a (truncated) log-normal distribution \cite{duongiot}, and the EN failures are generated randomly from the failure uncertainty set $\mathcal{Z}$. We have also used other distributions to generate the demand and observed similar trends and insights for all the figures. In the following, the \textbf{average cost} 
is the expected actual cost from the $1000$ generated scenarios, whereas the \textbf{worst-case cost} is the highest cost among these scenarios.

Figs. \ref{fig:avg_table}-\ref{fig:worst_table} compare the performance of the four schemes as the failure budget $K$ varies. We can see that the actual costs of all the schemes increase when the number of possible failures $K$ increases. Also, since both the demand and failure uncertainties are explicitly captured in ARO, it significantly outperforms the other schemes. An ARO solution typically installs the service on more ENs and procures resources more evenly among the selected ENs, which makes it more resilient to unexpected failures of ENs and demand fluctuation. When a failure happens at an EN, the workload initially assigned to it can be reallocated to the other nodes. 

The SO scheme performs better than the deterministic and heuristic schemes since the SO model takes the uncertainties into account when making the first-stage decisions. However, SO does not consider worst-case uncertainty realization. Moreover, the actual uncertainty realization may not always follow the historical pattern. Hence, SO performs worse than ARO. Define $\Psi$ as the scaling factor for the unmet demand penalty parameter $P$ compared to its value in the default setting. Figs.~\ref{fig: avg_Penalty}-\ref{fig: Worst_Penalty} further confirm the superior performance of the proposed ARO scheme compared to the other schemes. It is also easy to see that the total cost increases as the unmet demand penalty $P$ increases. Figs.~\ref{fig:avg_table}-\ref{fig: Worst_Penalty} show another advantage of ARO is that its cost does not vary significantly when $K$ or the unmet demand penalty changes. Thus, ARO is a preferred method for SPs who require high-quality of service.

\begin{figure}[ht!]
		\subfigure[Average cost]{
		  \includegraphics[width=0.245\textwidth,height=0.10\textheight]{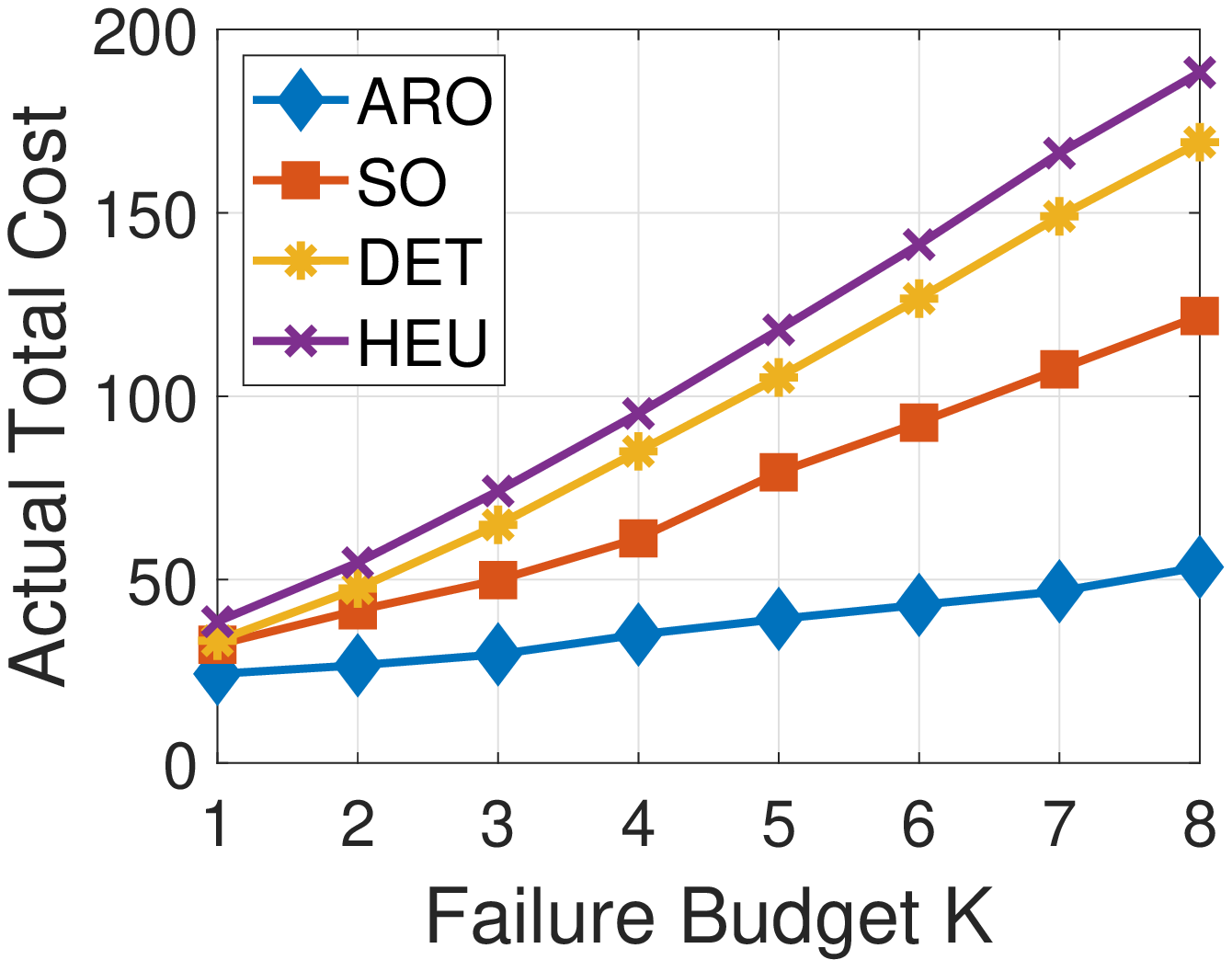}
	    \label{fig:avg_table}
	}   \hspace*{-2.1em} 
		 \subfigure[Worst-case cost]{
	     \includegraphics[width=0.245\textwidth,height=0.10\textheight]{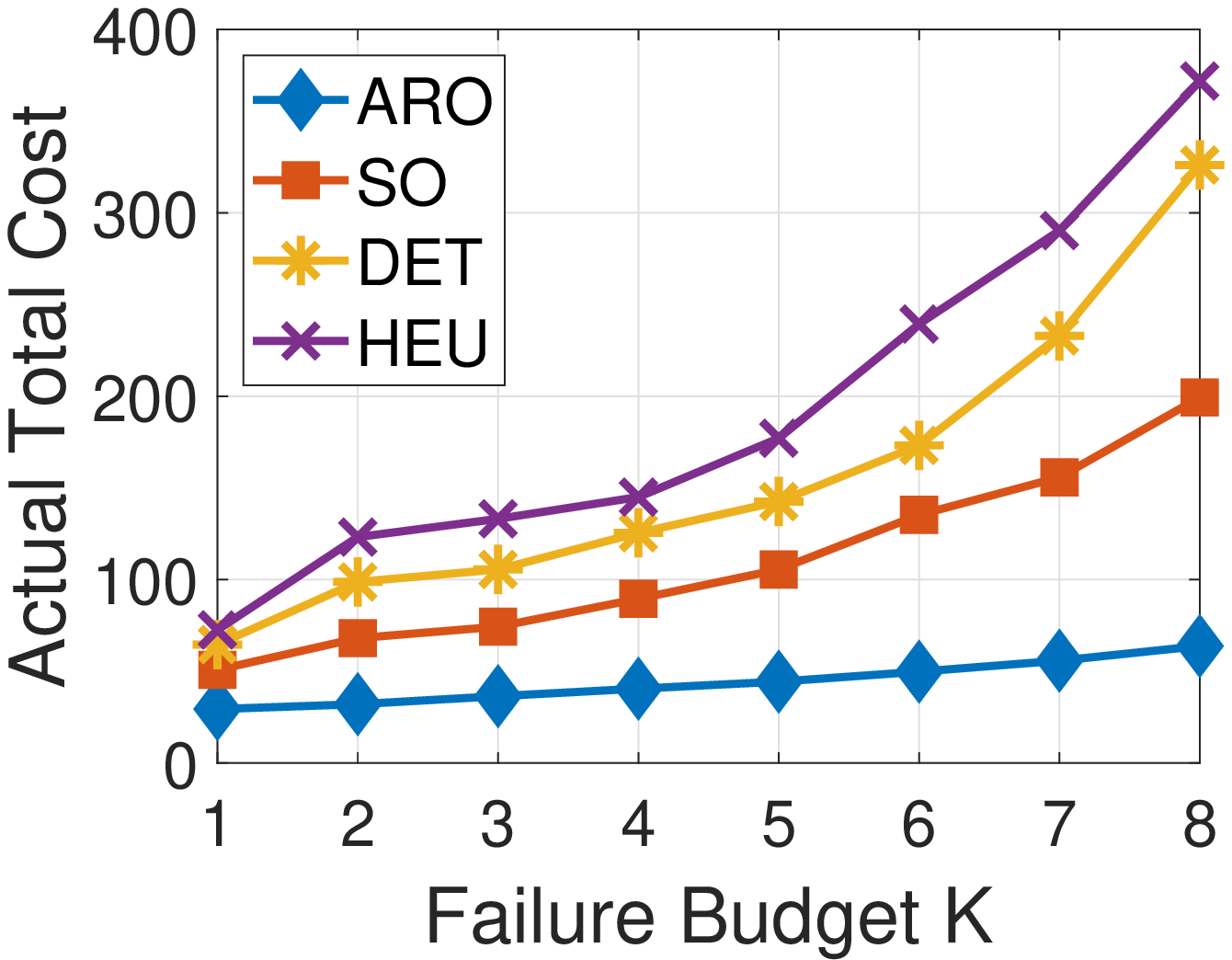}
	     \label{fig:worst_table}
	}  \vspace{-0.2cm}
	\subfigure[Average cost]{
	     \includegraphics[width=0.245\textwidth,height=0.10\textheight]{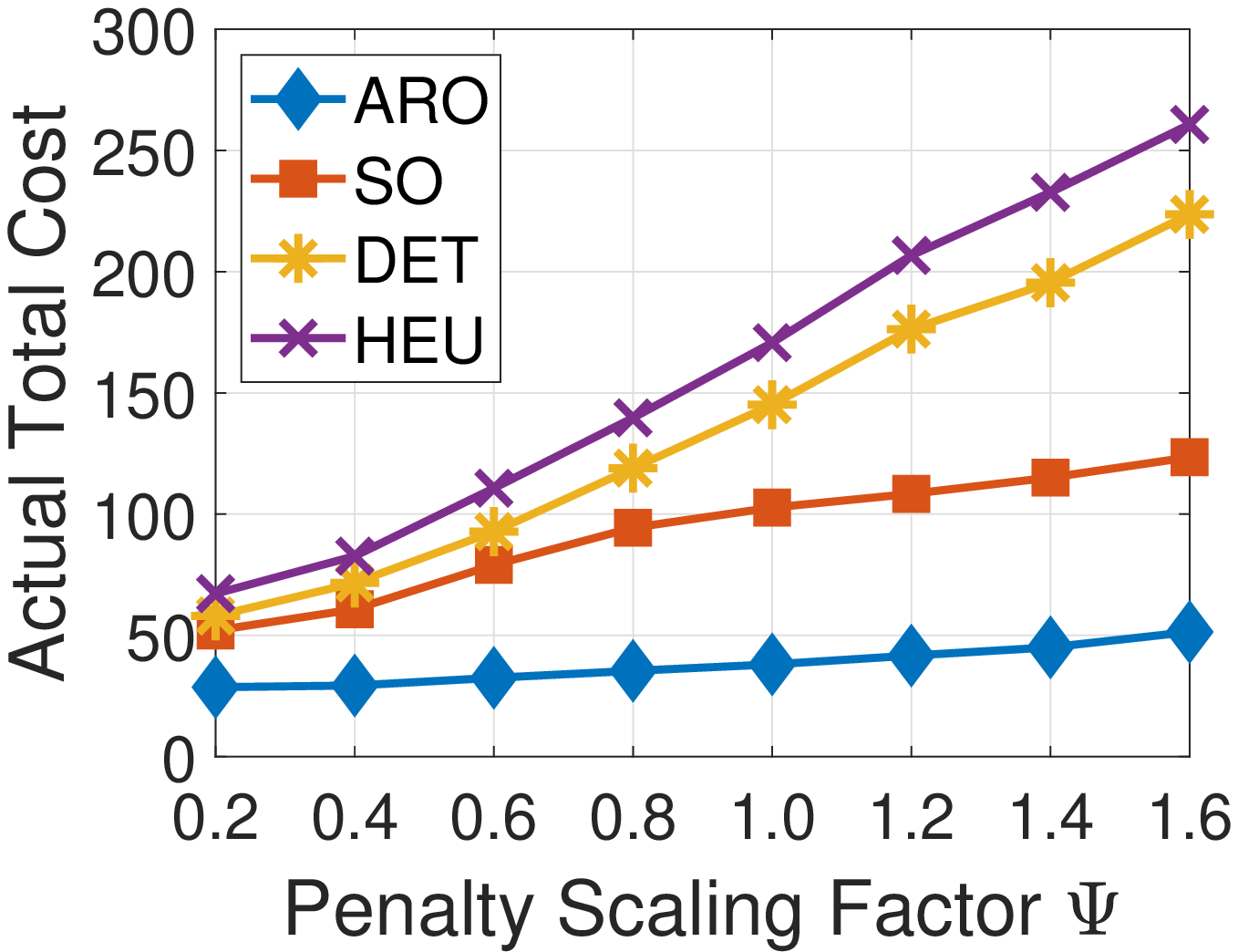}
	     \label{fig: avg_Penalty}
	}  \hspace*{-2.1em}
	\subfigure[Worst-case cost]{
	     \includegraphics[width=0.245\textwidth,height=0.10\textheight]{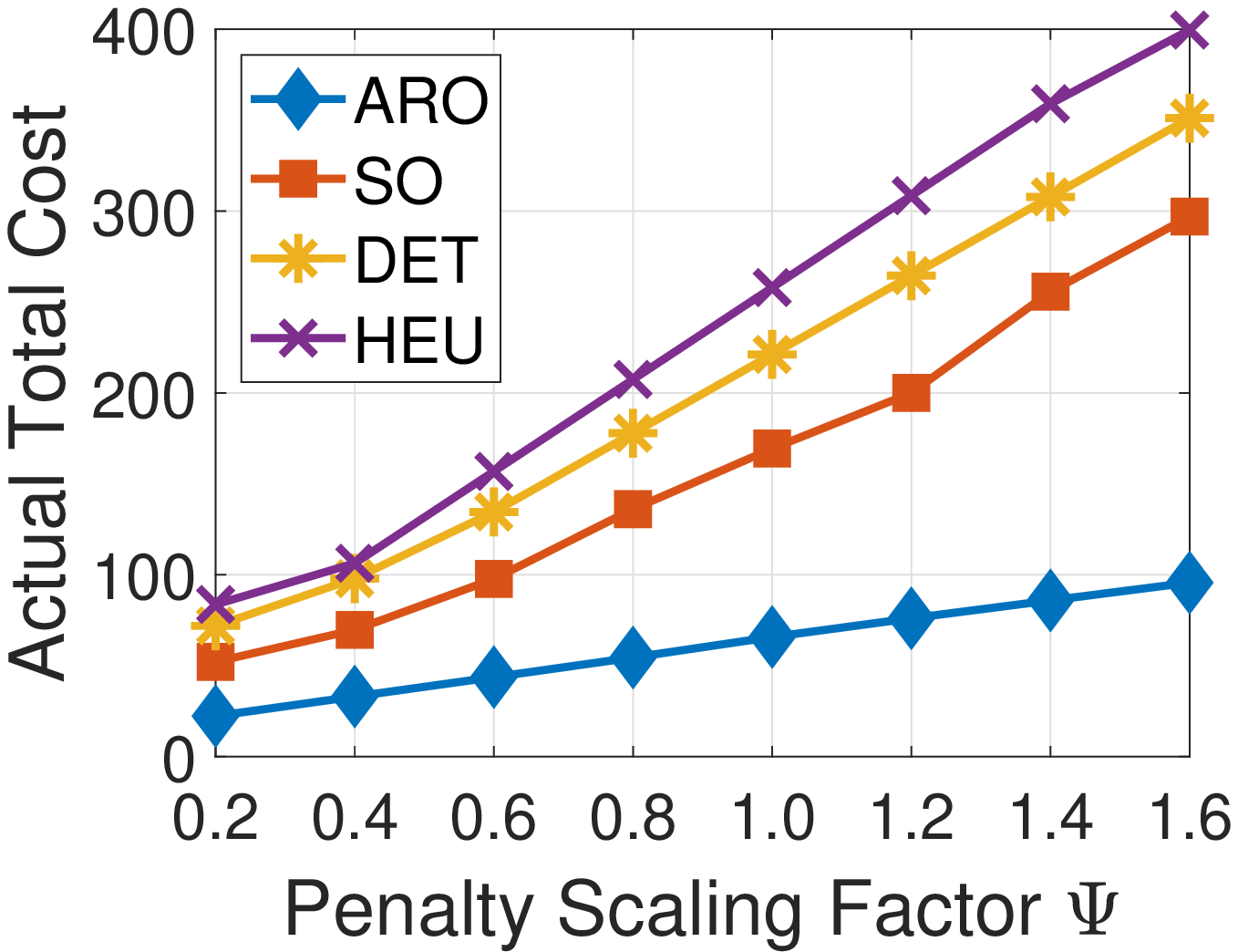}
	     \label{fig: Worst_Penalty}
	}  \vspace{-0.2cm}
	\caption{Performance comparison}
\end{figure} 

To illustrate the benefit of integrating failure uncertainty in the decision-making model of the SP, we will compare the performance of the proposed ARO model \textit{with} and \textit{without} failure consideration. For the case without failure consideration, the SP can simply set $K = 0$. For ARO with failure consideration, we set $K$ to be 2 (i.e., the first-stage decisions are robust against up to any two simultaneous failed ENs). Our results show that the provisioning costs with and without considering failures are almost the same. Both schemes almost exhaust the budget. On the other hand, Figs.~\ref{figure: avg_cost}-\ref{figure: worst_cost} show that ARO with failure consideration significantly reduces the SP's cost during node failure events. Although we do not present the result here, the benefit of considering failures  (i.e., the gap between two curves in each figure) increases drastically when the unmet demand penalty $P$ increases. Hence, failure consideration is important for SPs who need to maintain high service quality (i.e., less unmet demand). 

For ARO with $K = 2$, we can see that the cost increases slowly when the number of \textit{actual} failures is small, which implies that a minimal preparation 
can make the system resilient to unexpected failures. When more than five ENs (i.e., more than 25\% of the ENs) fail simultaneously, the cost increases faster because the probability that an EN selected for service placement in the first stage fails increases. To hedge against a large number of  failures, the SP  should set $K$ to be higher. However, it will increase the service provisioning cost. Hence, based on the desired level of robustness and resiliency, the SP needs to decide a proper value for $K$.
\begin{figure}[h!]
\label{fig:performance}
		\subfigure[Average cost]{
		  \includegraphics[width=0.245\textwidth,height=0.10\textheight]{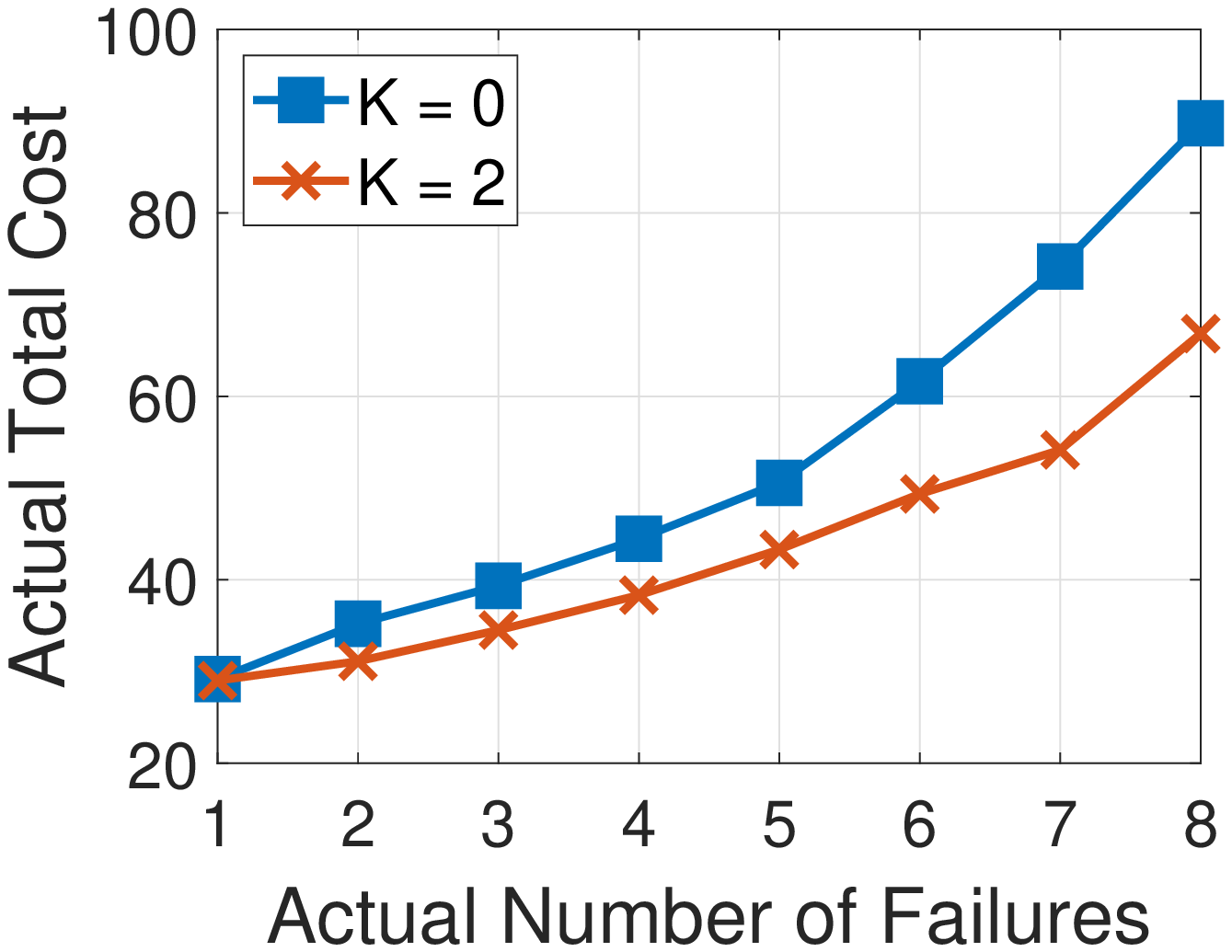}
	    \label{figure: avg_cost}
	}   \hspace*{-2.1em} 
		 \subfigure[Worst-case cost]{
	     \includegraphics[width=0.245\textwidth,height=0.10\textheight]{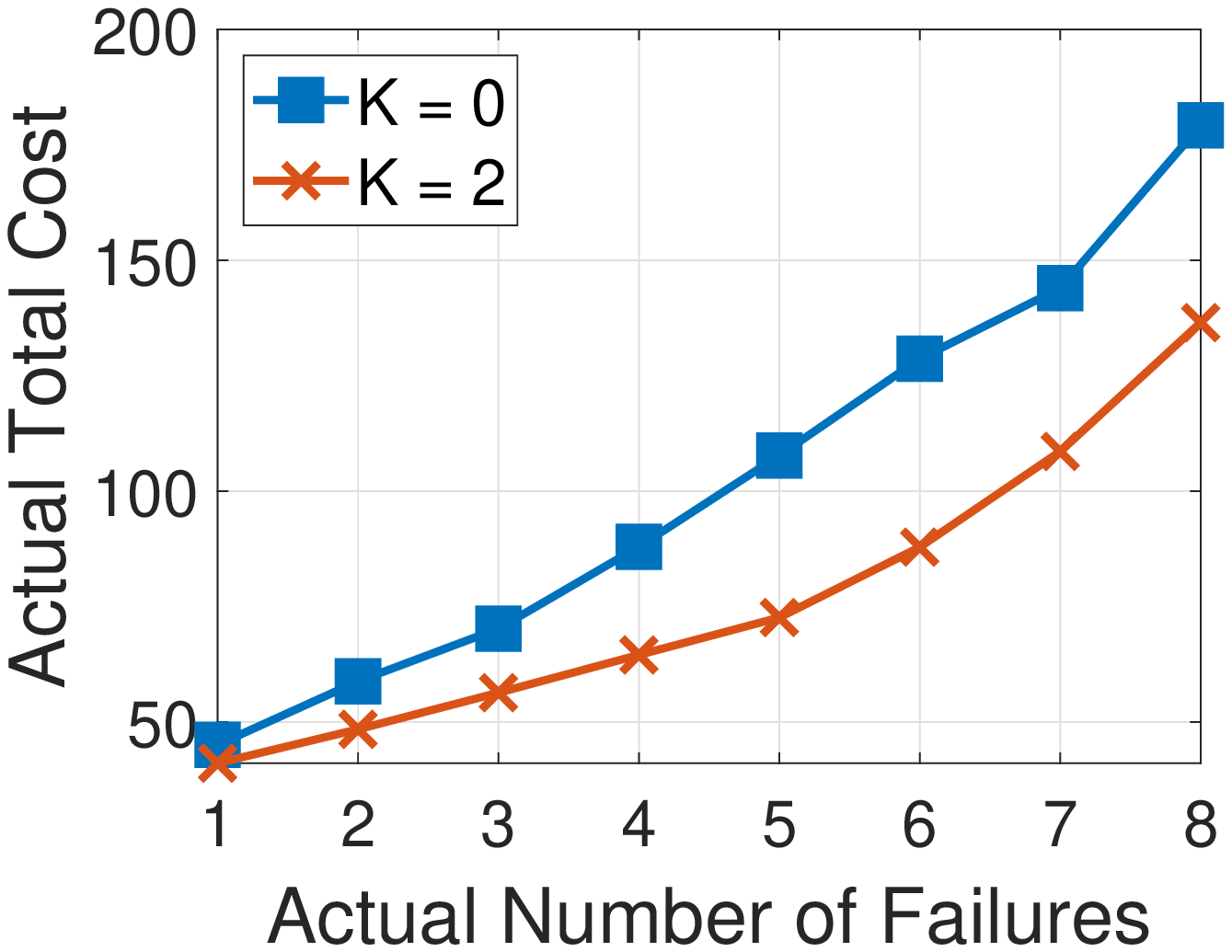}
	     \label{figure: worst_cost}
	}  \vspace{-0.2cm}
	\caption{The advantages of considering failure uncertainty}
\end{figure}

\subsection{Comparison Between \textbf{Algorithm \ref{CCGalg}} and ADR}

In the following, we analyze the performance of the CCG-based iterative algorithm (i.e., \textbf{Algorithm \ref{CCGalg}})  and the ADR approach. Fig.~\ref{fig:LDR_CCG} depicts the optimal costs produced by \textbf{Algorithm \ref{CCGalg}} and ADR when we vary the demand uncertainty budget $\Gamma$ and the failure budget $K$. Recall that \textbf{Algorithm \ref{CCGalg}} outputs an exact optimal solution for  $(\mathcal{P}_1$) while ADR gives an approximation solution. We can see that the total costs produced by the ADR policy are quite close to the exact optimal values obtained from \textbf{Algorithm \ref{CCGalg}}. The optimality gap is small when $K$ is small. In practice, the number of simultaneous failures is typically small, except during extreme events like natural disasters. Note that in our setting,  two failures already mean 10\% of nodes fail. Hence, ADR can be an alternative tool for the SP to solve most practical cases.  

\begin{figure}[h!]
\centering
	\includegraphics[width=0.30\textwidth,height=0.12\textheight]{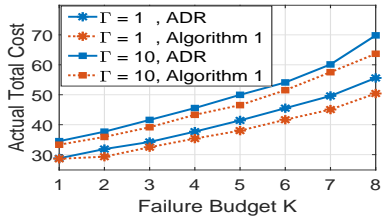}
	\caption{Comparison between \textbf{Algorithm 1} and ADR} 
	\label{fig:LDR_CCG}
\end{figure} 

\begin{table}[h]
\centering
\begin{tabular}{ |c||c|c|c|}
 \hline
 \multicolumn{4}{|c|}{Running time comparison ($\Gamma = 10$)} \\
 \hline
 $K$ & Duality-based CCG (s) & KKT-based CCG (s) & ADR (s) \\
 \hline
 5 & 53.60  & 3387s & 131.87\\
 \hline
 7 & 179.79 & 11876s & 135.84\\
 \hline
 9 & 629.24 & NA & 137.47\\
 \hline
11 & 576.89 & NA & 136.39\\
 \hline
\end{tabular}
\caption{Computational time comparison}
\label{tab: time_table}
\end{table}

\subsection{Time complexity analysis}
Table \ref{tab: time_table} reports the running time of the KKT-based CCG (see \textit{Appendix} \ref{kkta}  \cite{arxiv22}), the proposed duality-based CCG, and ADR algorithms for different values of $K$. We can see that the computational time of the CCG-based algorithms is sensitive to $K$; thus, they are sensitive to the uncertainty set. On the other hand, ADR is not very sensitive to the uncertainty set because ADR solves a MILP that does not depend on the number of extreme points of $\Xi$. Also, the KKT-based CCG algorithm is generally slower than the duality-based CCG algorithm since the KKT-based subproblem reformulation has a significantly larger size (in terms of the number of constraints and integer variables) compared to the duality-based subproblem reformulation. For example, when $K = 9$, the KKT-based CCG algorithm does not converge after 10 hours. 

We have also conducted experiments by varying $K$ and $\Gamma$ and found that \textbf{Algorithm \ref{CCGalg}} normally converges quickly for small values of $K$ and $\Gamma$. The duality-based CCG algorithm typically runs faster than the KKT-based CCG algorithm. For large uncertainty sets, instead of CCG, the SP may use ADR to obtain a good approximation solution in a reasonable time. 
In practice, the SP can run both the duality-based CCG and ADR algorithms in parallel and only implement the solution obtained by the ADR algorithm if the duality-based CCG does not converge after a certain amount of time. 

\begin{table}[h!]
\centering
\begin{tabular}{|c|l|clll|cll|}
\hline
Problem size &
  \multicolumn{1}{c|}{K} &
  \multicolumn{4}{c|}{Duality-based CCG (s)} &
  \multicolumn{3}{c|}{ADR (s)} \\ \hline
\multirow{4}{*}{\begin{tabular}[c]{@{}c@{}}I = J = 20\\ ($\Gamma =10$)\end{tabular}} &
  \multicolumn{1}{c|}{5} &
  \multicolumn{4}{c|}{66.15} &
  \multicolumn{3}{c|}{131.87} \\ \cline{2-9} 
 &
  \multicolumn{1}{c|}{7} &
  \multicolumn{4}{c|}{179.79} &
  \multicolumn{3}{c|}{135.84} \\ \cline{2-9} 
 &
  \multicolumn{1}{c|}{9} &
  \multicolumn{4}{c|}{529.24} &
  \multicolumn{3}{c|}{137.47} \\ \cline{2-9} 
 &
  11 &
  \multicolumn{4}{c|}{576.89} &
  \multicolumn{3}{c|}{138.39} \\ \hline
\multirow{4}{*}{\begin{tabular}[c]{@{}c@{}}I = 100 \\ J = 20\\ ($\Gamma = 10$)\end{tabular}} &
  \multicolumn{1}{c|}{5} &
  \multicolumn{4}{c|}{456.51} &
  \multicolumn{3}{c|}{3873.12} \\ \cline{2-9} 
 &
  \multicolumn{1}{c|}{7} &
  \multicolumn{4}{c|}{465.98} &
  \multicolumn{3}{c|}{3974.91} \\ \cline{2-9} 
 &
  \multicolumn{1}{c|}{9} &
  \multicolumn{4}{c|}{470.05} &
  \multicolumn{3}{c|}{3915.13} \\ \cline{2-9} 
 &
  \multicolumn{1}{c|}{11} &
  \multicolumn{4}{c|}{\begin{tabular}[c]{@{}c@{}} 465.21 (0.1\% gap) \\ 418.61 (0.5\% gap) \end{tabular}} &
  \multicolumn{3}{c|}{4015.87} \\ \hline
\multicolumn{1}{|l|}{\multirow{5}{*}{\begin{tabular}[c]{@{}l@{}}I = J = 25\\ ($\Gamma = 10$)\end{tabular}}} &
  5 &
  \multicolumn{4}{c|}{\begin{tabular}[c]{@{}c@{}}575.38 (0.5\% gap)\\ 537.19 (1\% gap)\end{tabular}} &
  \multicolumn{3}{c|}{1119.45} \\ \cline{2-9} 
\multicolumn{1}{|l|}{} &
  7 &
  \multicolumn{4}{c|}{\begin{tabular}[c]{@{}c@{}}1174.32 (0.5\% gap)\\ 960.52 (1\% gap)\end{tabular}} &
  \multicolumn{3}{c|}{1198.21} \\ \cline{2-9} 
\multicolumn{1}{|l|}{} &
  9 &
  \multicolumn{4}{c|}{\begin{tabular}[c]{@{}c@{}} 
  2937.03 (0.5\% gap)\\ 2617.98 (1\% gap)\end{tabular}} &
  \multicolumn{3}{c|}{1077.12} \\ \cline{2-9} 
\multicolumn{1}{|l|}{} &
  11 &
  \multicolumn{4}{c|}{\begin{tabular}[c]{@{}c@{}} 
  3125.17 (0.5\% gap)\\ 3032.22 (1\% gap)\end{tabular}} &
  \multicolumn{3}{l|}{1014.83} \\ \hline
\multirow{4}{*}{\begin{tabular}[c]{@{}c@{}}I = J = 30\\ ($\Gamma = 10$)\end{tabular}} &
  5 &
  \multicolumn{4}{c|}{\begin{tabular}[c]{@{}c@{}}7326.25 (0.1\% gap)\\ 
  5173.84 (1\% gap)\end{tabular}} &
  \multicolumn{3}{c|}{1167.69} \\ \cline{2-9} 
 &
  7 &
  \multicolumn{4}{c|}{\begin{tabular}[c]{@{}c@{}}11246.71 (0.1\% gap)\\ 
  8011.77 (1\% gap) 
  \end{tabular}} &
  \multicolumn{3}{c|}{1250.87} \\ \cline{2-9} 
 &
  9 &
  \multicolumn{4}{c|}{\begin{tabular}[c]{@{}c@{}}14789.21 (0.1\% gap)\\ 
  9933.21 (1\% gap) 
  \end{tabular}} &
  \multicolumn{3}{c|}{1315.14} \\ \cline{2-9} 
 &
  11 &
  \multicolumn{4}{c|}{\begin{tabular}[c]{@{}c@{}}15981.04 (0.1\% gap)\\ 
  9279.31 (1\% gap) 
  \end{tabular}} &
  \multicolumn{3}{c|}{1285.72} \\ \hline
\multirow{4}{*}{\begin{tabular}[c]{@{}c@{}}I = J = 40\\ ($\Gamma = 10$)\end{tabular}} &
  5 &
  \multicolumn{4}{c|}{\begin{tabular}[c]{@{}c@{}} 11326.42 (0.1\% gap) \\ 6903.76 (0.5\% gap) \\  1953.24 (5\% gap)\end{tabular}} &
  \multicolumn{3}{c|}{3279.71} \\ \cline{2-9} 
 &
  7 &
  \multicolumn{4}{c|}{\begin{tabular}[c]{@{}c@{}} 15032.17 (0.1\% gap)\\ 13685.35 (0.5\% gap) \\ 4317.21 (5\% gap)  \end{tabular}} &
  \multicolumn{3}{c|}{3365.3} \\ \cline{2-9} 
 &
  9 &
  \multicolumn{4}{c|}{\begin{tabular}[c]{@{}c@{}}17893.42 (0.1\% gap)\\ 14593.28 (0.5\% gap)\\ 5927.43 (5\% gap)\end{tabular}} &
  \multicolumn{3}{c|}{3476.82} \\ \cline{2-9} 
 &
  11 &
  \multicolumn{4}{c|}{\begin{tabular}[c]{@{}c@{}}25379.01 (0.1\% gap)\\ 16197.37 (0.5\% gap)\\   6834.94 (5\% gap)\end{tabular}} &
  \multicolumn{3}{c|}{3575.02} \\ \hline
\end{tabular}
\caption{Run-time experiments}
\label{table:runtime2}
\end{table}

Table \ref{table:runtime2} illustrates  the running time for varying network sizes and uncertainty sets. Additional computational results with $\Gamma = 5$ are provided in \textit{Appendix} \ref{appendix:gamma5}  \cite{arxiv22}. 
For each setting, we take the average time over 10 different problem instances.  As expected, the computational time of the ADR reformulation is significantly influenced by the network size but remains relatively unaffected by the uncertainty set. For the duality-based CCG algorithm,  we set the (optimality) gap between the UB and LB in \textbf{Algorithm 1} to be 0.1\%, 0.5\%, 1\%, and 5\%. The algorithm can deliver a solution within a reasonable time even, for large-scale networks. The running time of the duality-based CCG algorithm is heavily dependent on the uncertainty set. Furthermore, it exhibits a greater sensitivity to the number of ENs ($J$) rather than the number of areas. It is because $J$ affects the number of integer variables in the MP. It is worth emphasizing that the underlying problem is a planning problem that 
does not require real-time optimization. 

\subsection{Convergence of \textbf{Algorithm \ref{CCGalg}}}
Figs.~\ref{fig:convergence1}-\ref{fig:convergence2} show the convergence of  \textbf{Algorithm \ref{CCGalg}} for different values of $K$ and $\Gamma$. 
The algorithm converges within a small number of iterations in both cases. However, the number of iterations increases as $K$ and $\Gamma$ increase. Note that  larger values of $K$ and $\Gamma$ imply the uncertainty set $\Xi$ has more extreme points. 
Thus, the result confirms that the convergence \textbf{Algorithm \ref{CCGalg}} is sensitive to the uncertainty set.

\begin{figure}[h!]
		\subfigure[K = 2, $\Gamma$ = 5]{
		  \includegraphics[width=0.245\textwidth,height=0.10\textheight]{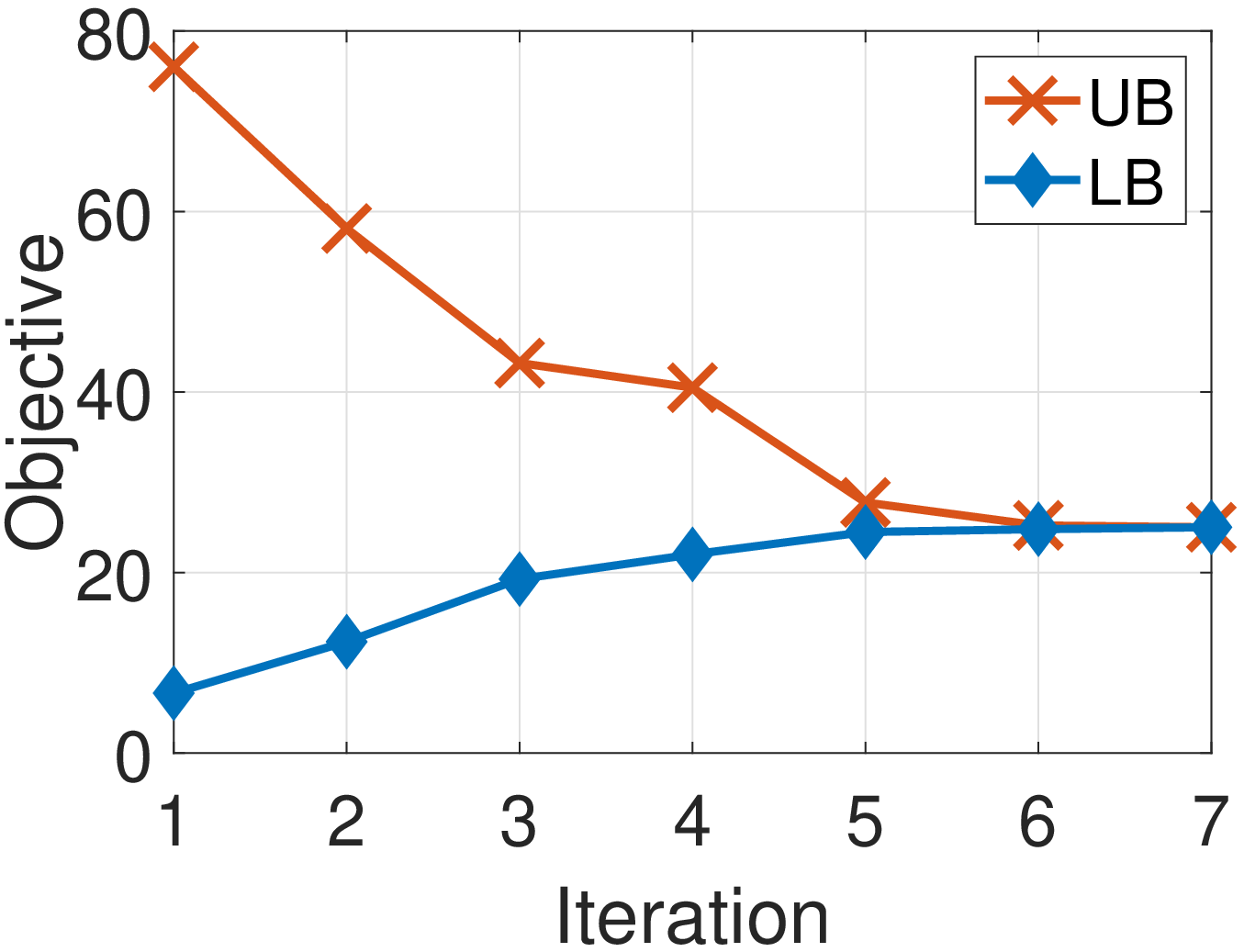}
	    \label{fig:convergence1}
	}   \hspace*{-2.1em} 
		 \subfigure[K = 5, $\Gamma$ = 10]{
	     \includegraphics[width=0.245\textwidth,height=0.10\textheight]{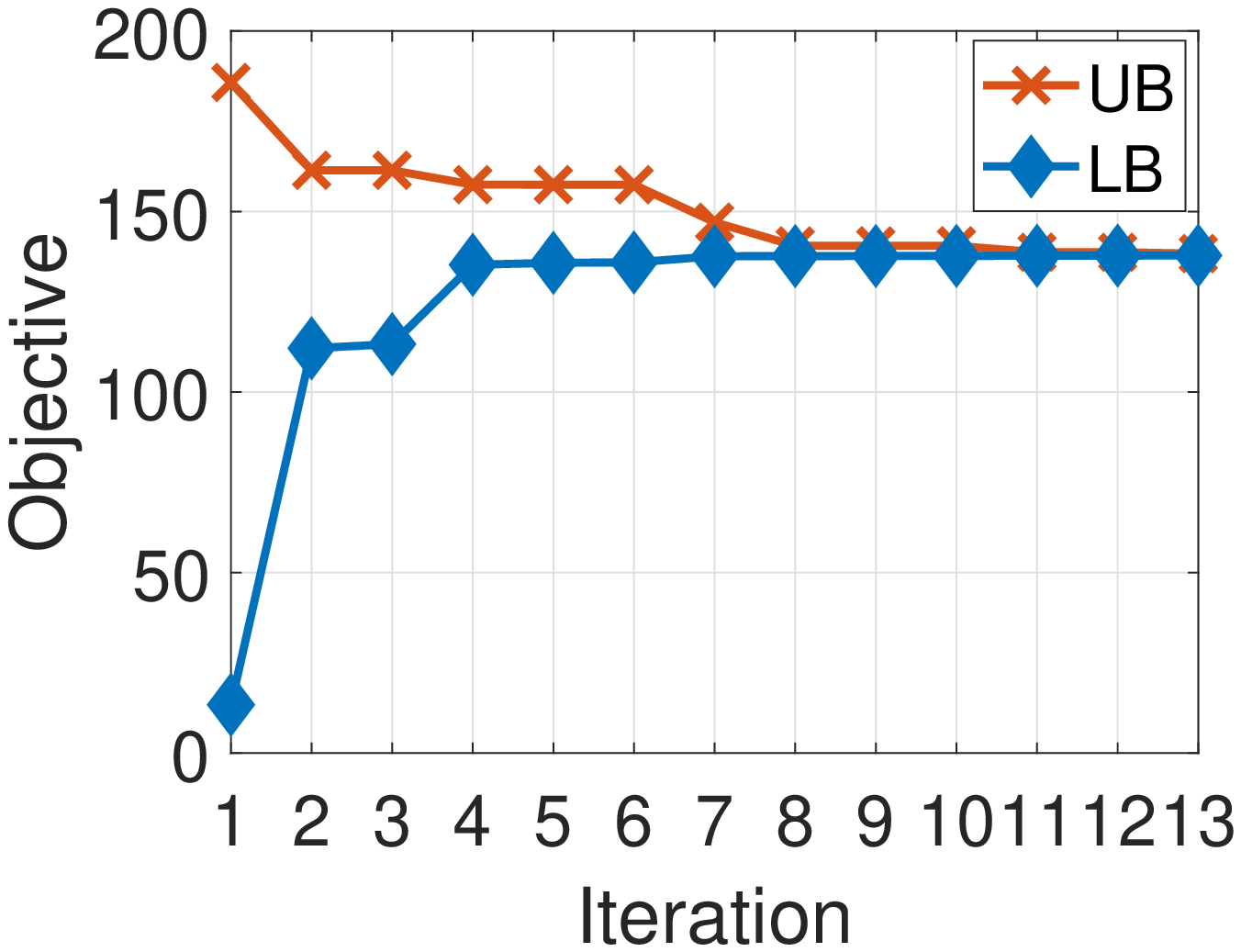}
	     \label{fig:convergence2}
	}  \vspace{-0.2cm}
	\caption{Convergence of \textbf{Algorithm \ref{CCGalg}}}
\end{figure}

\vspace{-0.3cm}

\subsection{Sensitivity Analysis}
\label{sensi}

We now evaluate the impacts of important system parameters on the optimal solution. Figs.~\ref{fig: size_of_K}-\ref{fig: size_of_Gamma} illustrate the impacts of the uncertainty set on the system performance. Note that \textit{``Objective''} is the optimal objective value of ($\mathcal{P}_1$).  As expected, the total cost increases as the uncertainty set enlarges (i.e., when $K$ and $\Gamma$ increase). Hence, there is an inherent trade-off between the system cost and the level of robustness. We can also see that the system cost increases when the delay penalty $\beta$ increases (i.e., the SP is more delay-sensitive). Additionally, Figs. \ref{fig: alpha_psi_cost}-\ref{fig: alpha_beta_cost} further show that the cost increases as $\alpha$ increases. 
This is because a higher value of $\alpha$ implies a larger uncertainty set, which consequently leads to a higher cost for a more robust and conservative solution. Fig.~\ref{fig: alpha_psi_cost} illustrates that the total cost increases as the unmet demand penalty increases.

\begin{figure}[h!]
	\subfigure[Varying $\beta$ and $K$]{
		  \includegraphics[width=0.24\textwidth,height=0.10\textheight]{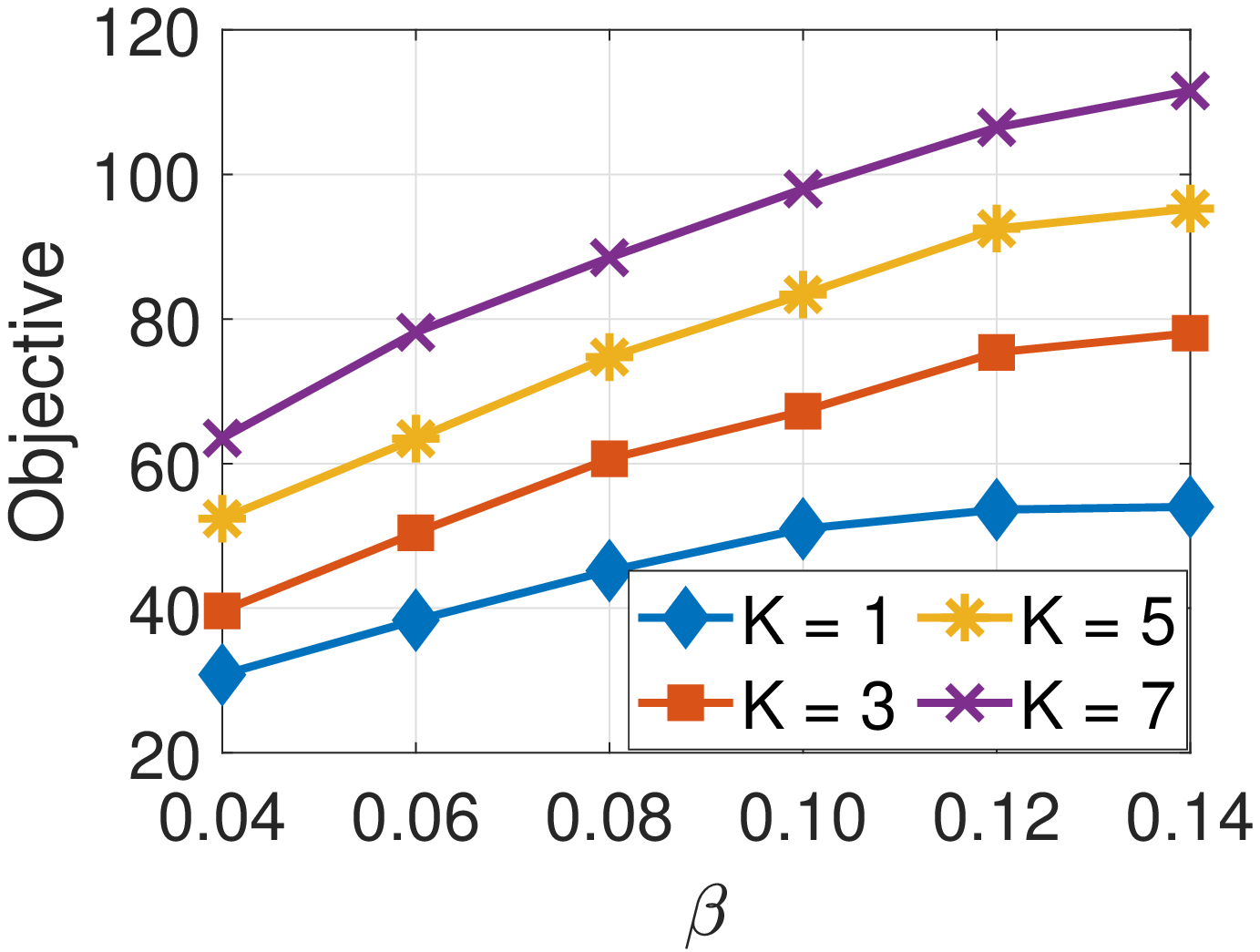}
	    \label{fig: size_of_K}
	}   \hspace*{-2.1em} 
		 \subfigure[Varying $\beta$ and $\Gamma$]{
	     \includegraphics[width=0.24\textwidth,height=0.10\textheight]{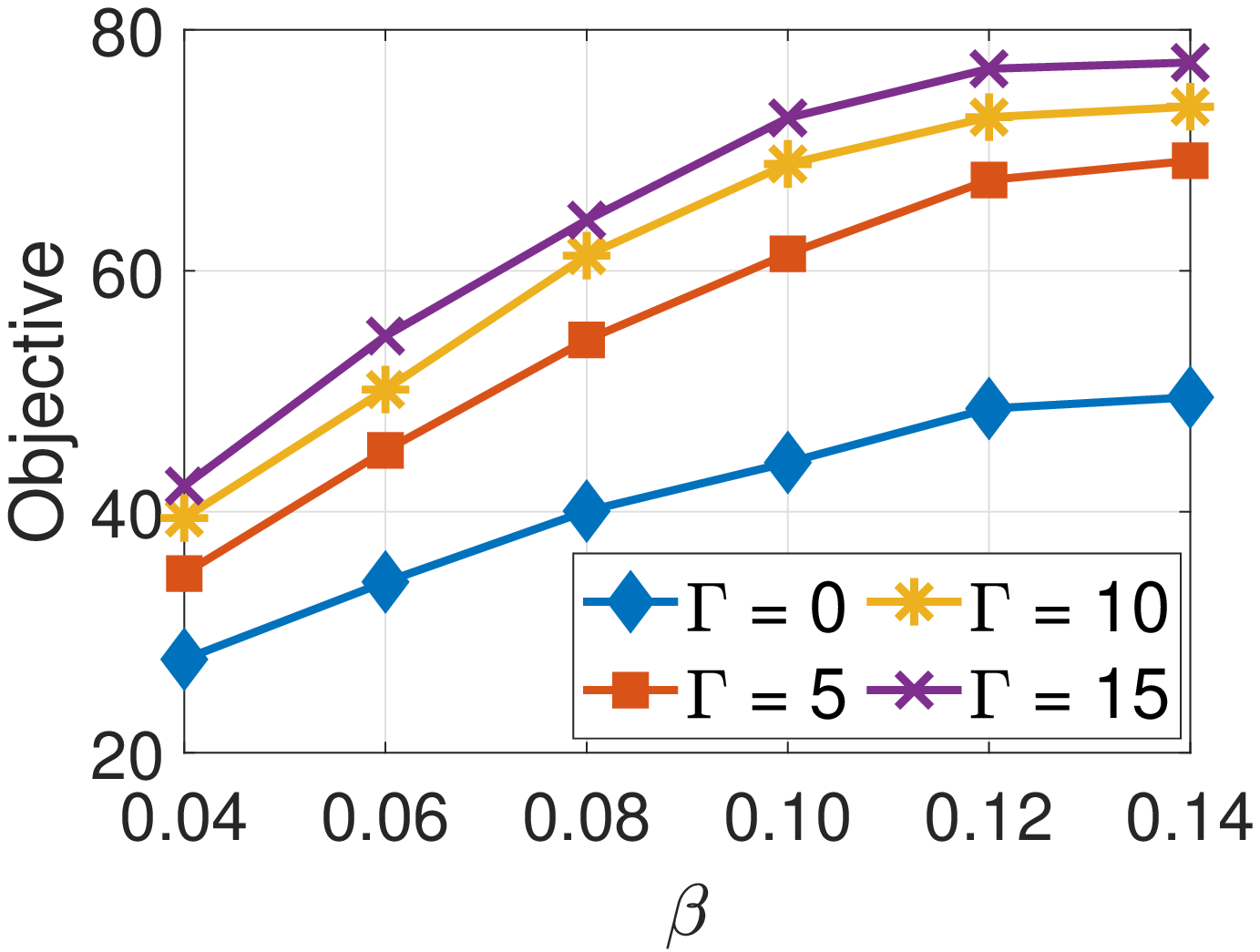}
	     \label{fig: size_of_Gamma}
	}  \vspace{-0.2cm}
	\subfigure[Varying $\alpha$ and $\Psi$]{
 	    \includegraphics[width=0.24\textwidth,height=0.10\textheight]{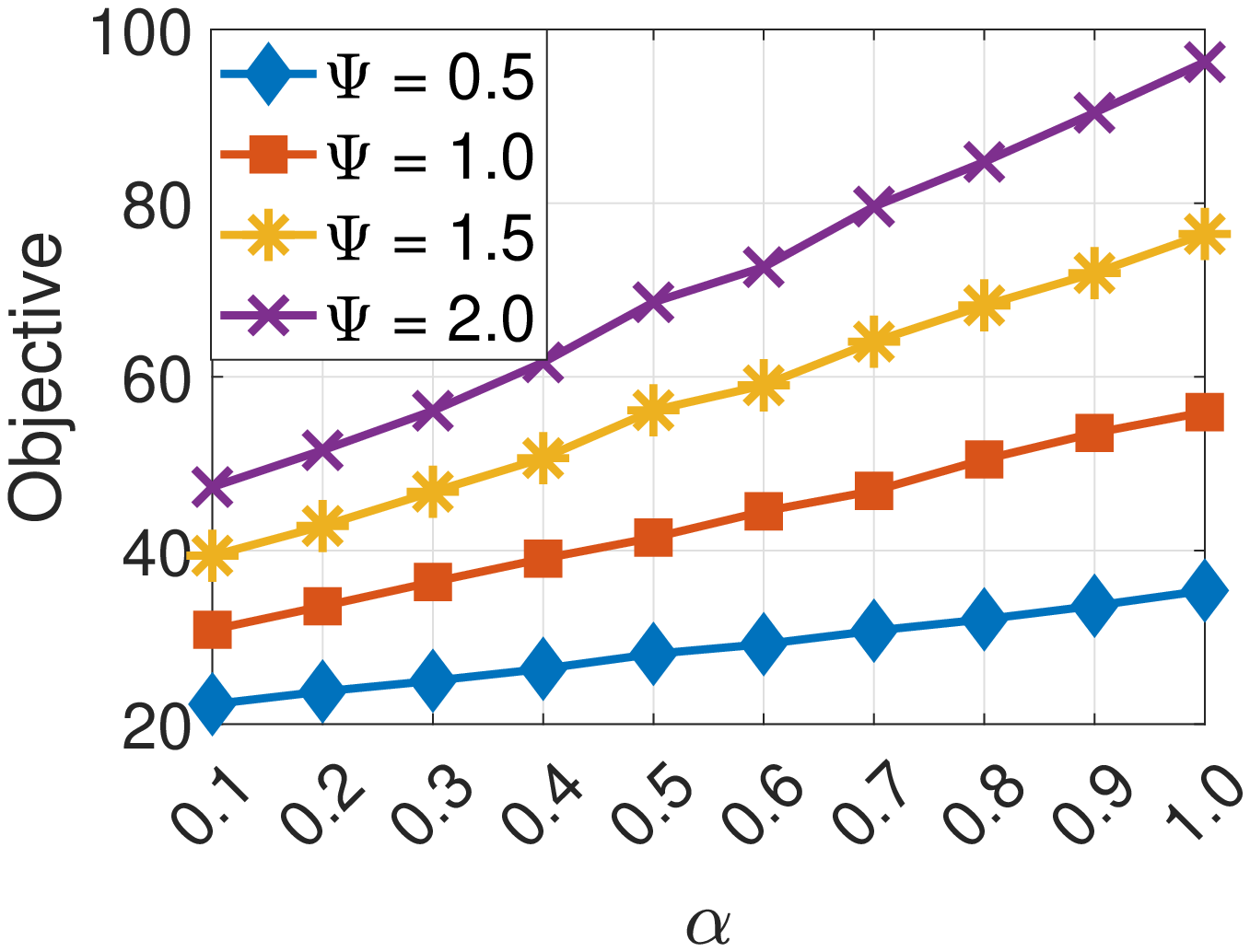} 
 	    \label{fig: alpha_psi_cost}} 
 	\hspace*{-2.1em} 
	\subfigure[Varying $\alpha$ and $\beta$]{
	     \includegraphics[width=0.24\textwidth,height=0.10\textheight]{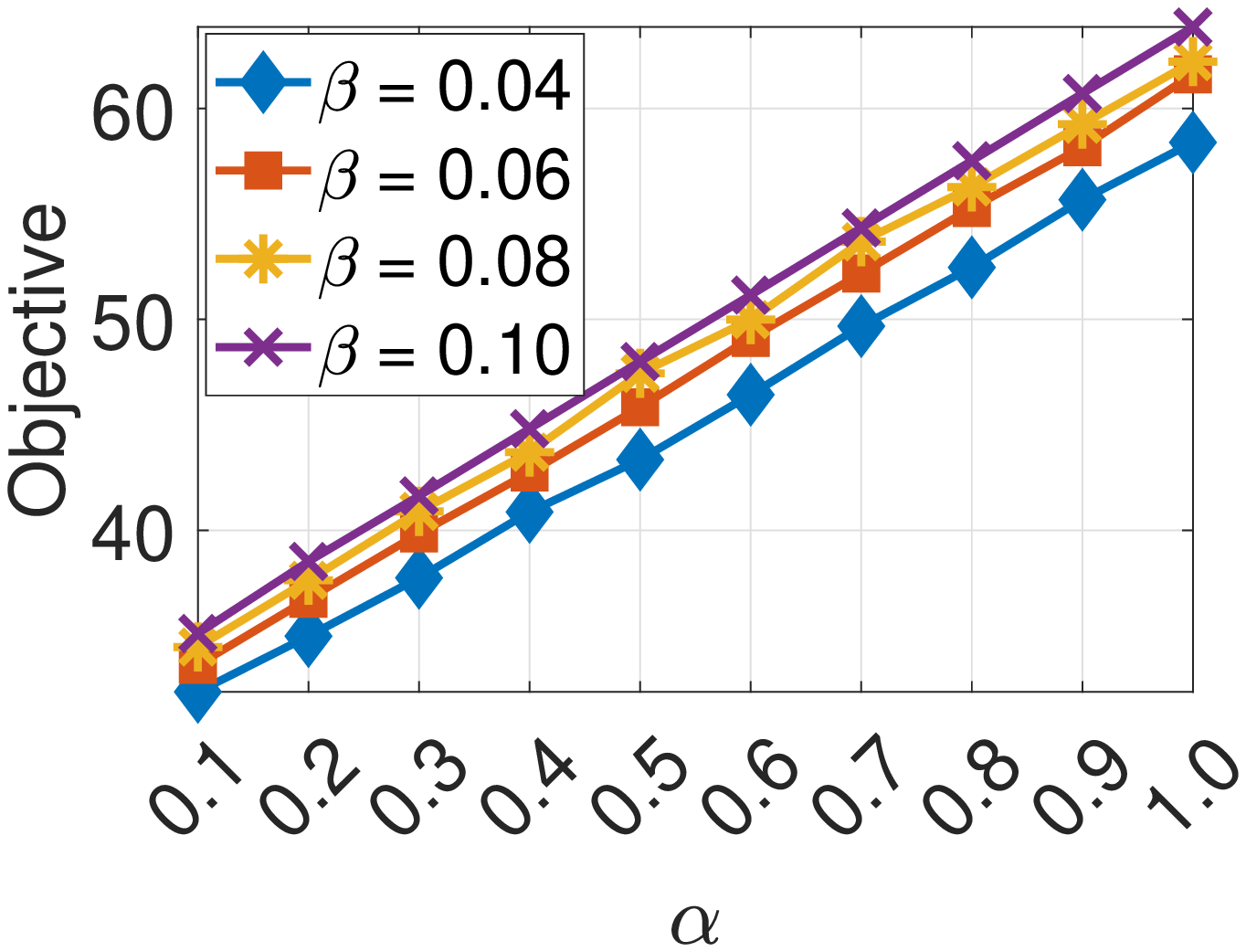}
	     \label{fig: alpha_beta_cost}
	} 
	\caption{The impacts of the uncertainty set on the performance}
\end{figure}

The impacts of the delay penalty parameter $\beta$ and the unmet demand penalty parameter $\Psi$ on the optimal solution are further demonstrated in Figs. \ref{fig: psi_beta_provisioning}-\ref{fig: psi_beta_cost}. We can observe that the objective value and provisioning cost increase as $\Psi$ and  $\beta$ increase. In addition, a higher value of $\beta$ indicates the SP is more delay sensitive and results in a higher cost. The SP may try to allocate the demand in every area to its closest ENs to reduce the delay and avoid the unmet demand penalty. Thus, cheaper ENs may not be chosen. According to Fig. \ref{fig: psi_beta_provisioning}, the SP is more willing to procure more resources in the first stage to reduce the delay and unmet demand penalties during the operation stage. The provisioning cost becomes saturated as the unmet demand penalty exceeds a certain threshold for each value of $\beta$. This implies that the SP fully spends its budget for service placement and resource procurement to enhance the service quality if $\beta$ and $\Psi$ (i.e., $P$) are sufficiently large. 

\begin{figure}[h!]
	\subfigure[Varying  $\beta$ and $\Psi$]{
 	    \includegraphics[width=0.242\textwidth,height=0.11\textheight]{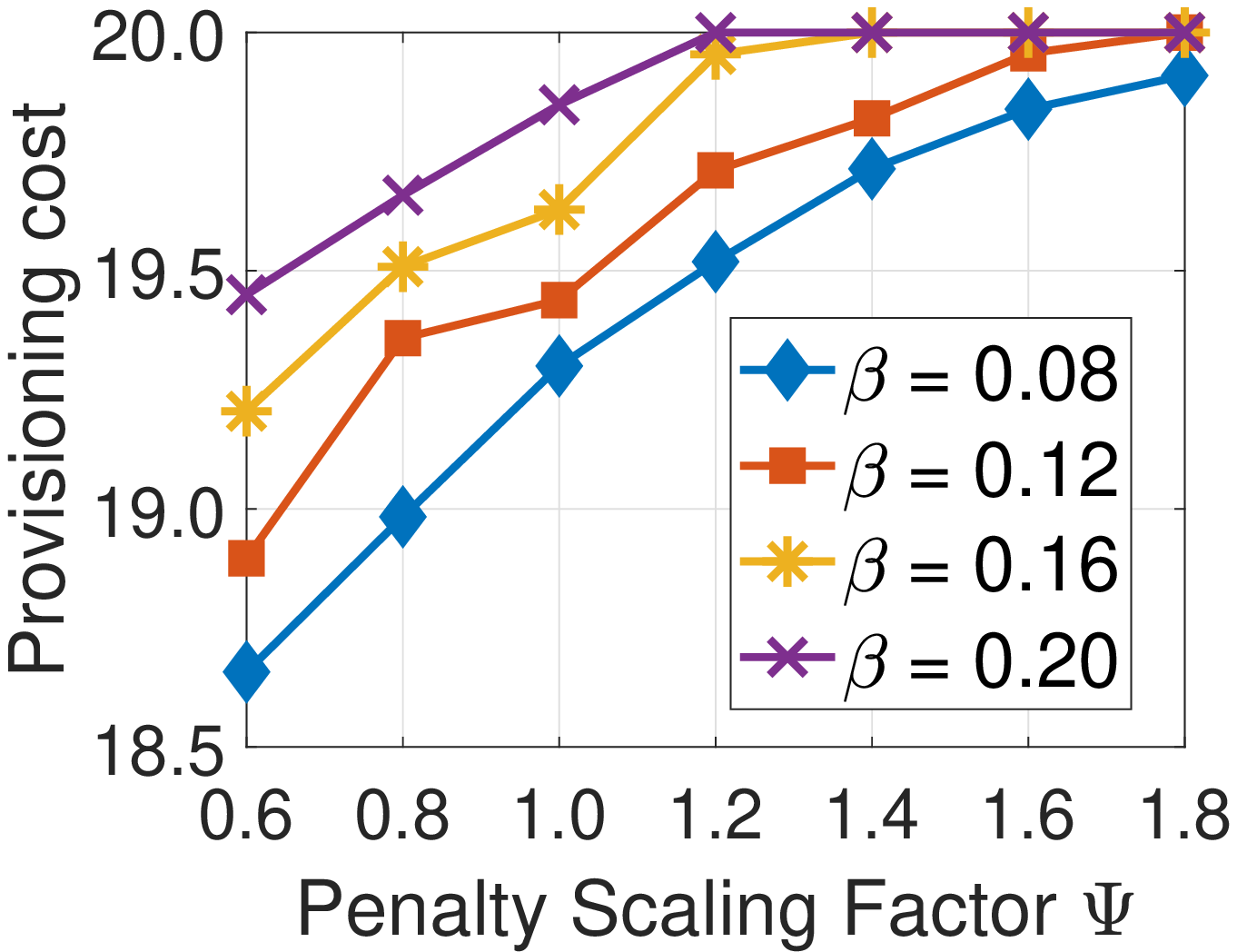} 
 	    \label{fig: psi_beta_provisioning}} 
 	\hspace*{-1.9em}
    \subfigure[Varying $\beta$ and $\Psi$]{
	     \includegraphics[width=0.242\textwidth,height=0.11\textheight]{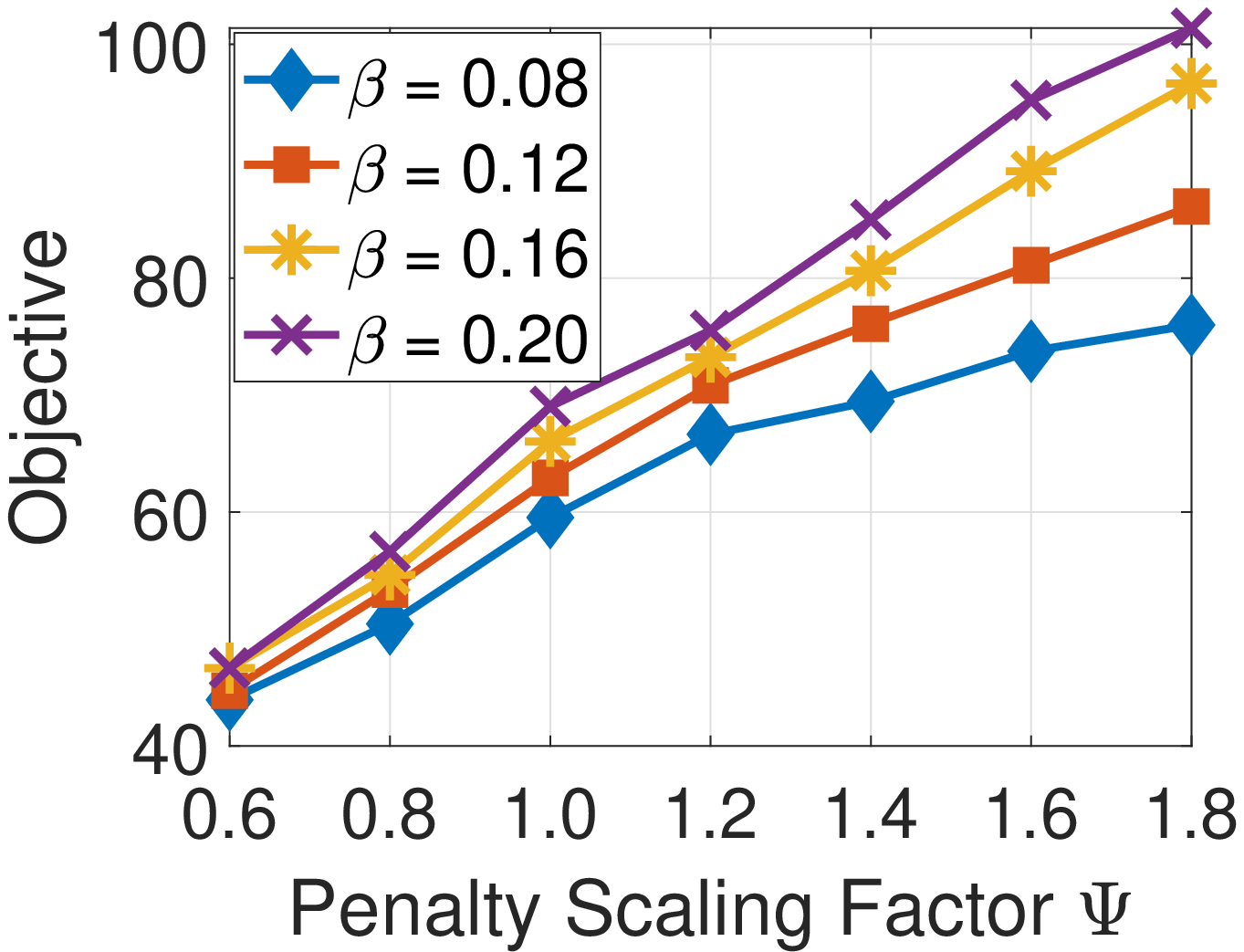}
	     \label{fig: psi_beta_cost}
	}   
	\caption{The impacts of $\beta$ and $\Psi$ on the system performance}
\end{figure}

Next, we examine the impacts of the system size on the optimal solution. Intuitively, when the number of ENs decreases, the total edge resource capacity is reduced. When the number of areas increases, the total demand in the system increases. Fig.~\ref{figure: Number_of_ENs} shows that the total cost decreases as the number of ENs increases because the SP has more 
flexibility for purchasing resources when more ENs are available. Also, as can be seen in Fig.~\ref{figure: Number_of_Area}, the total cost increases as the number of areas increases. This result is intuitive since the SP needs to procure more resources to serve higher demand.

\begin{figure}[h]
\label{fig:size_of_system}
		\subfigure[Varying number of ENs]{
		  \includegraphics[width=0.245\textwidth,height=0.10\textheight]{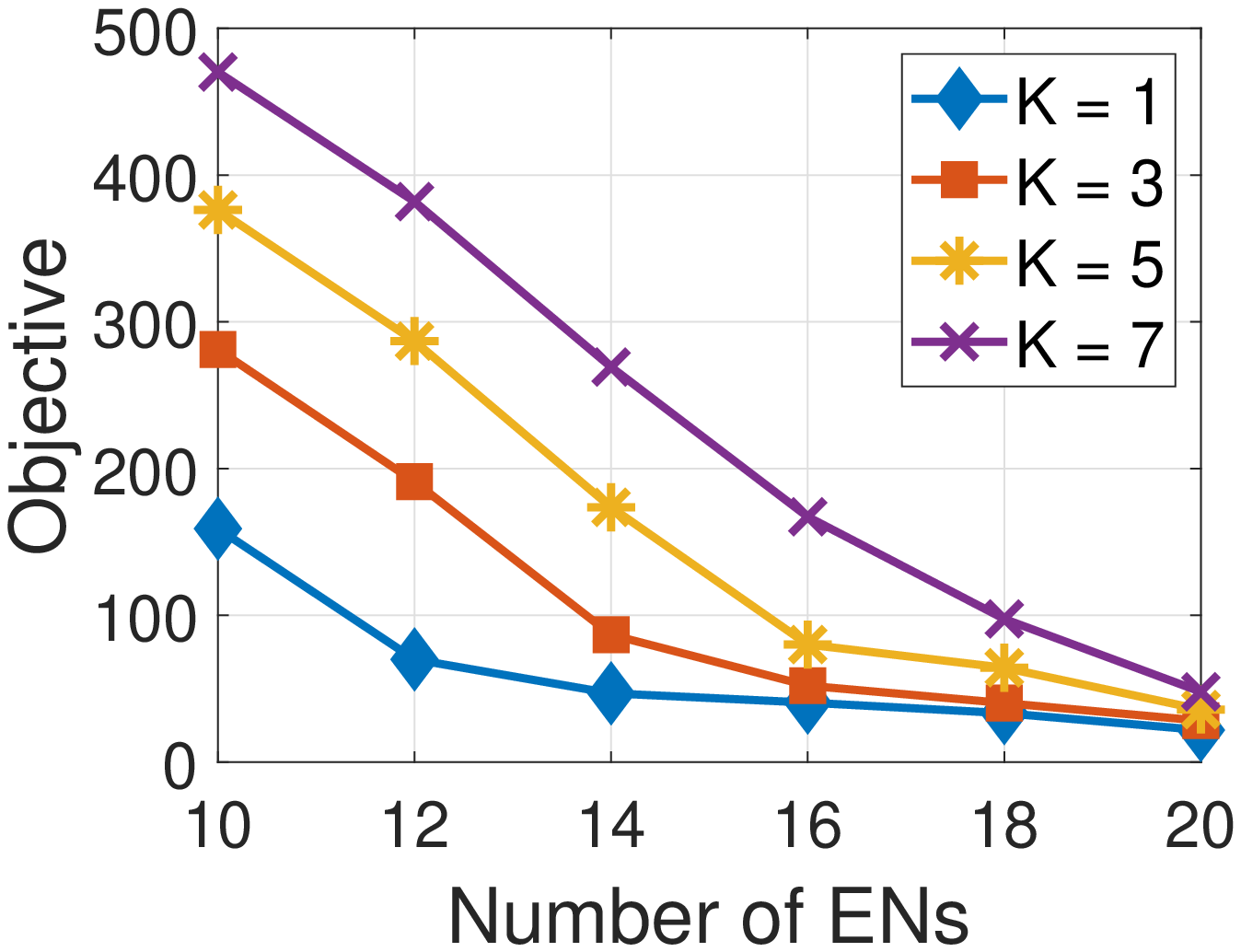}
	    \label{figure: Number_of_ENs}
	}   \hspace*{-2.05em} 
		 \subfigure[Varying number of areas ]{
	     \includegraphics[width=0.245\textwidth,height=0.10\textheight]{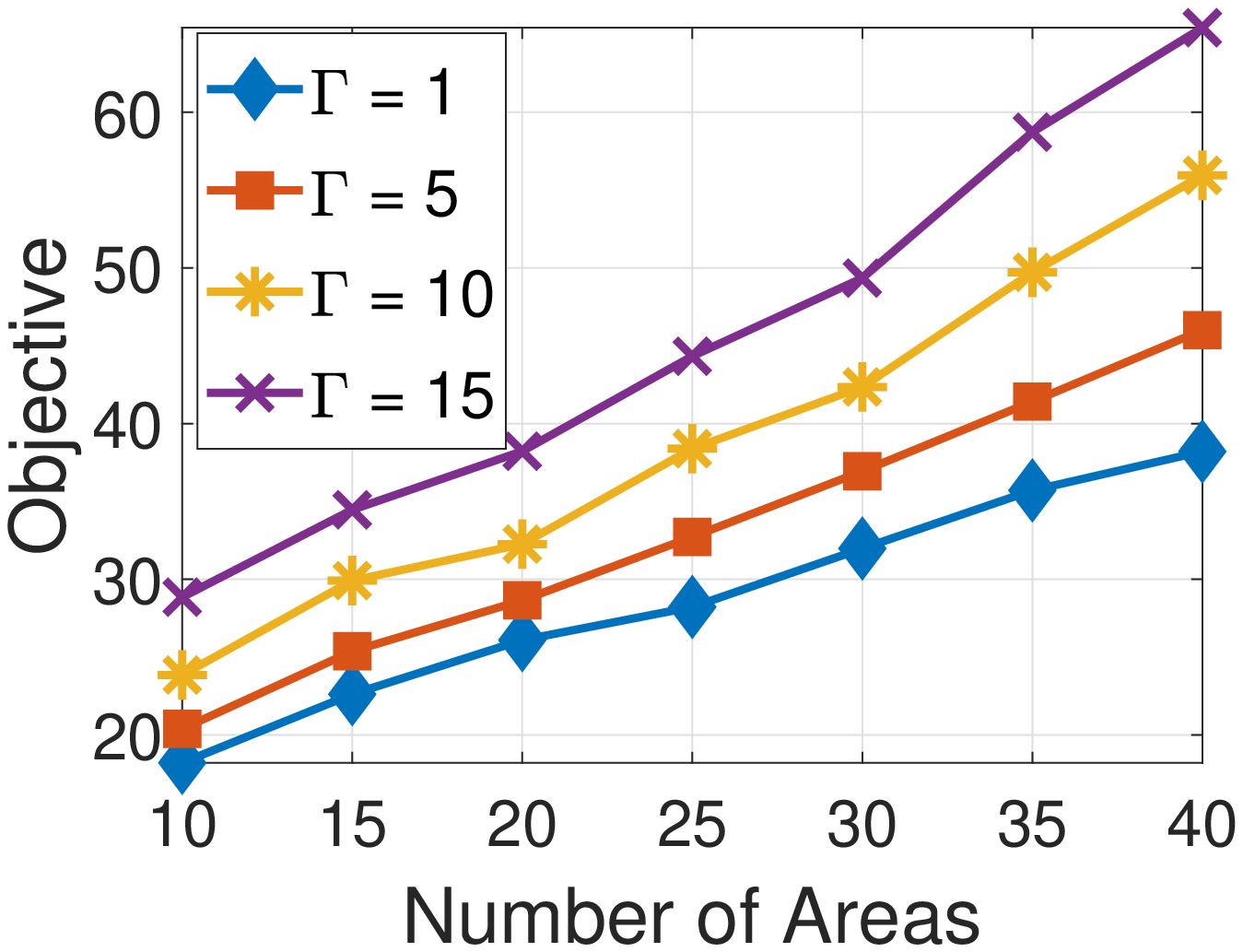}
	     \label{figure: Number_of_Area}
	}  \vspace{-0.2cm}
	\caption{The impacts of the system size on the performance}
\end{figure}

Fig. \ref{fig:delay_threshold} illustrates the impact of the maximum delay threshold (i.e., $D^{\sf max}$) and delay penalty $\beta$ on the system performance. As described in (\ref{avaliblity}), an EN $j$ can only serve user requests from area $i$ (i.e., $a_{i,j}= 1$) only if the delay between them is within the threshold $D^{\sf max}$. Hence, a decrease in $D^{\sf max}$, which indicates a more stringent delay requirement, leads to an increase in the number of $a_{i,j}$ values that become zero, indicating a reduction in the number of eligible ENs to serve user demand.  Consequently, the total cost of the system increases as $D^{\sf max}$ decreases and decreases as it increases. 

\begin{figure}[h!]
\centering
	\includegraphics[width=0.28\textwidth,height=0.11\textheight]{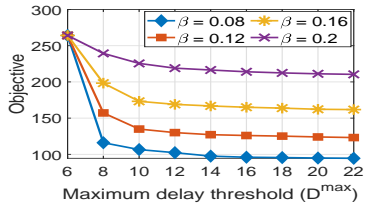}
	\caption{Impact of the maximum delay threshold $D^{\sf max}$} 
	\label{fig:delay_threshold}
\end{figure}

\section{Related Work}
\label{related_work}

There exists a substantial body of literature on edge resource allocation and service placement.
The problem of joint allocation of communication and computational resources for task offloading in  EC is  popular among the wireless community \cite{MEC17}. In \cite{duongtcc,duong1}, authors propose a market equilibrium approach for fair and efficient edge resource allocation. 
Farhadi \textit{et al.} \cite{vfar21} propose a two time-scale optimization model for the joint service placement and request scheduling problem under multi-dimensional resource and budget constraints. In \cite{spas19}, the authors present an optimal service placement solution that maximizes the total user utility. An IoT application provisioning problem is formulated in \cite{ryu19} to jointly optimize application placement and data routing, considering both bandwidth and delay requirements. However, these existing works primarily focus on deterministic models where all system parameters are assumed to be precisely known. A privacy-preserving decentrailized mobile edge content caching and sharing problem was formulated formulated in \cite{crowdcache} through non-cooperative games while considering the high mobility of MEDs.

A growing literature has studied edge resource management under uncertainty \cite{duong3,touy19,hbad20,jli21,Li21,duongiot,bandit_Netsoft}. Reference \cite{duong3} presents a primal-dual online LP method for matching multiple ENs and multiple services arriving in an online fashion, while \cite{bandit_Netsoft} presents a bandit-based online posted pricing scheme for allocating heterogeneous edge resources. In \cite{touy19}, the authors formulate the dynamic service placement problem as a contextual multi-armed bandit problem and propose a Thompson-sampling algorithm to properly allocate edge resources. Badri \textit{et al.} \cite{hbad20} cast an energy-aware multi-service placement problem under demand uncertainty as  
a stochastic problem, which aims to maximize the service quality under the energy budget constraint. Li \textit{et al.} \cite{jli21} employ an approximation algorithm to place the service and allocate resources considering demand uncertainty. Distributionally robust optimization is utilized in \cite{Li21} to tackle the edge service provisioning problem, with the goal of maximizing the expected revenue under the worst-case distribution of the uncertain demand. In \cite{duongiot}, 
an iterative algorithm is proposed to address the joint edge service placement and sizing problem under demand uncertainty. This line of research mainly  optimizes the system performance under demand uncertainty, without considering network failures. 

Extensive research has been conducted on the topic of reliable and availability-aware Network Function Virtualization (NFV) placement. Cziva \textit{et. al} \cite{infocom18} formulate a novel VNF placement problem for allocating VNFs to distributed ENs to minimize the total delay from all users to the associated VNFs. The joint VNF placement and routing problem is studied in \cite{Racha18} to minimize the queuing delay. A Bender-decomposition-based algorithm is introduced in \cite{pzha18}  to minimize the expected operational cost for 
resilient NFV placement. Reference \cite{Tao20} presents a MILP formulation for  joint VNF placement and scheduling to minimize the total cost while satisfying various latency requirements. A joint VNF placement and resource utilization problem is formulated in \cite{infocom20}, aiming to minimize the overall resource consumption of servers and links. In \cite{Nattakorn22}, the authors focus on the joint VNF placement and scheduling problem for profit maximization of delay-sensitive services, considering stringent service deadlines. Most of the existing NVF placement models are either deterministic or probabilistic, and predominantly employ   heuristic solution approaches. 

Resiliency and reliability in EC have also attracted a lot of attention recently. In \cite{aara21}, the authors evaluate the failure resilience of a service deployed on the edge infrastructure by learning the Spatiotemporal dependencies between edge server failures and exploiting the topological information to incorporate link failures. Kherraf \textit{et al.} \cite{Kherraf19} formulate a MIP for optimizing workload assignment to different ENs, considering a probabilistic reliability model for the ENs. Chemodanov \textit{et al.} \cite{dche20} use chance-constrained programming to study the problem of reliable orchestration of latency-sensitive SFCs, considering probabilistic capacity constraints. 
In \cite{mjoh15}, a single-stage robust backup resource network model is introduced to protect the system from random link failures. 
In \cite{sbia21}, the authors cast the distributed service chain composition problem with resource failures as a non-cooperative game and employ a weighted potential game to find an optimal Nash equilibrium to reduce request latency and network congestion. 

In \cite{risk_ton20}, the authors present a novel approach to the task offloading problem in a multi-server multi-access edge computing (MEC) environment, considering the potential failure of MEC servers. Their approach combines prospect theory and tragedy of the commons and involves multiple users optimizing their offloading decisions among the MEC servers, where each server constitutes a common pool of resources.Qu \textit{et al.} \cite{Qu2021} utilize the monotone sub-modular property to develop approximation algorithms to maximize the expected utility from serving user requests at the edge in the presence of uncertain service failures. Most existing works rely on probabilistic models to capture failure uncertainties, which can be difficult to obtain in practice. Additionally, they often consider only failure uncertainty without considering other uncertain system parameters such as resource demand. Furthermore, existing work primarily focuses on single-stage robust optimization and overlooks proactive decisions. To the best of our knowledge, we are the first to jointly consider demand and node failure uncertainties in a unified two-stage robust optimization framework, which helps the SP make proactive decisions to mitigate the impact of the uncertainties. Our work is different from the existing literature from both the modeling aspects and solution approaches.

\section{Conclusion and Future Work}
\label{conclusion}

In this paper, we proposed a two-stage robust optimization model for resilience-aware service placement in EC,
considering the uncertainties of both resource demand and EN failures. The proposed model helps the SP not only improve the user experience but also reduce its cost. 
We first developed a CCG-based iterative algorithm to find an exact optimal solution to the underlying robust problem. Due to fewer integer variables and constraints in the reformulated problem, the developed duality-based reformulation can help significantly speed up the computational time compared to the KKT-based reformulation method employed in the traditional CCG approach. 
To tackle large-scale problems, 
we further introduced a novel ADR-based approximation method, which is insensitive to the size of the uncertainty set, for solving the  problem.
The conducted experiments demonstrated the advantages of the proposed two-stage adaptive robust model compared to the deterministic, stochastic, and heuristic methods. Furthermore, the results show the considerable advantages of proactively preparing for unpredictable future events at a small extra provisioning cost, which encourages the SP to invest in improving resiliency. 

There are several interesting directions for future work. First, we would like to integrate other types of failures, such as link failures and partial EN failures, into the proposed robust model. We are also interested in considering other uncertainty factors, such as edge resource prices and renewable energy generation. Additionally, we would like to extend the model to consider VNF and multiple resource types, as well as multi-period workload allocation.
Finally, another interesting direction is to study the robust edge network design problem from the perspective of an edge infrastructure provider.

\bibliographystyle{IEEEtran}


\appendix

\subsection{Convergence of Algorithm \ref{CCGalg}}
\label{proofcon}
Because $\Xi^{\sf e}$ has a finite number of $R$ elements, to prove the \textit{Proposition \ref{prop1}}, we only need to show that before convergence, the subproblem always outputs a distinct extreme point at every iteration. In other words, if any extreme point $(\lambda^*, z^*)$ is repeated at two iterations, then $LB = UB$, which means  convergence. We can show it as follows.

Assume  $(y^*, t^* ,\eta^* )$ is an optimal solution to the \textit{MP} and $(\lambda^*, z^*, x^*, q^*)$ is an optimal solution to the subproblem in iteration $r$. Also, the extreme point $(\lambda^*, z^*)$ has appeared in a previous iteration. From the definition of the UB in (\ref{UBu}), we have:
\begin{align}
\label{eq:UB}
 UB \leq   \sum_j p_{j} y_{j}^* + \sum_j h_j t_j^* + \sum_i P_i q_i^{*} + \beta \sum_{i,j} d_{i,j} x_{i,j}^{*} .
\end{align}
Since $(\lambda^*, z^*)$ appeared in a previous iteration, the cuts related to $(\lambda^*, z^*)$ to be added to the \textit{MP} at iteration $r+1$
has already been added previously to the \textit{MP}. Thus,
the \textit{MP} in iteration $r+1$ is identical to \textit{MP} in iteration $r$. Hence, $(y^*, t^* ,\eta^* )$ is also the optimal solution to the \textit{MP} in iteration $r+1$. Note that we have: 
\begin{align}
\label{eq:LB}
    LB & \geq \sum_j h_j t_j^* + \sum_j p_j y_j^* + \eta^*
\end{align}

On the other hands, since  $(\lambda^*, z^*, x^*, q^*)$ is an optimal solution to the subproblem in iteration $r$, from (\ref{innermostp}), obviously we have 
\beqn
\label{eq:SPO}
\sum_j P_i q_i^{*}  + \beta \sum_{i,j} d_{i,j} x_{i,j}^* \leq \sum_j P_i q_i  + \beta \sum_{i,j} d_{i,j} x_{i,j},
\eeqn
for \textbf{every} $(x, q)$ that satisfies constraints (\ref{iSPstart})-(\ref{iSPend}) given $\lambda = \lambda^*$ and $z = z^*$. 
Since the extreme point $(\lambda^*, z^*)$ has already been identified and related constraints are added to the MP before iteration $r$, there exists $j \leq r$ where $(x^l, q^l)$ needs to satisfy  constraints (\ref{iSPstart})-(\ref{iSPend}) given $\lambda = \lambda^*$ and $z = z^*$. From (\ref{eq:eta}) and (\ref{eq:SPO}), at iteration $r$ we have:
\beqn
\label{eq:etaa}
\sum_j P_i q_i^{*}  + \beta \sum_{i,j} d_{i,j} x_{i,j}^* \leq \sum_j P_i q_i^l  + \beta \sum_{i,j} d_{i,j} x_{i,j}^l \leq \eta^*.
\eeqn
From (\ref{eq:UB}), (\ref{eq:LB}), and (\ref{eq:etaa}), we have: $LB \geq UB$, which implies $LB = UB$ because LB cannot exceed UB. Thus, any repeated extreme point $\xi^* = (\lambda^*, z^*)$ implies convergence. Since $\Xi^{\sf e}$ is a finite set with $R$ elements, the proposed algorithm is guaranteed to converge in $O(R)$ iterations.

\subsection{KKT-based subproblem reformulation}
\label{kkta}
Similar to the CCG algorithm in \cite{CCG}, we can use KKT conditions to transform the subproblem (\ref{SPP}) into an equivalent MILP problem. The KKT conditions are applied to the innermost problem (\ref{innermostp}). The resulting set of KKT conditions contains stationarity, primal feasibility, dual feasibility,  complementary slackness constraints. Each  non-linear complementary constraint can be transformed into a set of linear constraints by using Fortuny-Amat transformation \cite{BigM}. Next, we will show the derivation to the final reformulation through KKT conditions. Firstly, the Lagrangian function of the inner minimization problem in the subproblem is:
\begin{align*}
    & \mathcal{L}(x_{i,j},q_i,s_i,u_j,\gamma_{i,j}^{1},\gamma_{i}^{2}) = \sum_i P_i q_i + \beta \sum_{i,j} d_{i,j} x_{i,j} \\
    & - \sum_{i} \gamma_{i}^{2} q_i - \sum_{i,j} \gamma_{i,j}^{1} x_{i,j} + \sum_{j} u^{1}_j \big( \sum_{i} x_{i,j} - (1 - z_j) C_j \hat{t}_j  \big) \\
    & \sum_{j} u^{2}_j \big( \sum_{i} x_{i,j} - \hat{y}_j \big)  + \sum_i s_i \big(  \bar{\lambda}_i  + g_i \tilde{\lambda}_i - q_i - \sum_{j}x_{i,j} \big) \\ 
    &+ \sum_{i,j} \pi_{i,j} \big(x_{i,j} - a_{i,j} C_{j} \big) 
\end{align*}
The KKT conditions are:
\begin{subequations}
\begin{align}
    & \frac{\partial \mathcal{L}}{\partial x_{i,j}} = \beta d_{i,j} + u_j + \pi_{i,j} - s_i = \gamma_{i,j}^{1} \geq 0, \forall i,j \label{stationary_w}\\
    & \frac{\partial \mathcal{L}}{\partial q_{i}} = P_i - s_i =  \gamma_{i}^{2} \geq 0, \: \forall i \label{stationary_q}\\
    & \sum_{j} x_{i,j} + q_i \geq \lambda_i, \: \forall i \label{primal_constr1}\\
    & \sum_{i} x_{i,j} \leq C_j \hat{t}_j (1 - z_j); \quad \sum_{j} x_{i,j} \leq \hat{y}_j, \forall j \label{primal_constr2}\\
    & 0 \leq x_{i,j} \leq a_{i,j} C_j, ~ \forall i,j; ~~ q_i \geq 0, ~~ \forall i \label{primal_var}\\
    & \gamma_{i,j}^{1}, \pi_{i,j} \geq 0, \: \forall i,j; \: \gamma_{i}^{2}, s_i \geq 0, \: \forall i; \: u^{\sf 1}_j, u^{\sf 2}_j \geq 0, \: \forall j \label{dual_constr}\\
    & \big(\lambda_i - \sum_{j} x_{i,j} - q_i \big) s_i = 0, \: \forall i \label{comp_constr}\\
    & \big( \sum_{i} x_{i,j} - (1 - z_j) C_j \hat{t}_j \big) u^{1}_j = 0, \: \forall j\\
    & \big( \sum_{i} x_{i,j} - \hat{y}_j \big) u^{2}_j = 0, \: \forall j\\
    & \big( x_{i,j} - a_{i,j} C_{j} \big) \pi_{i,j} = 0, \: \forall i,j \\
    & x_{i,j} \gamma_{i,j}^{1} = 0, \: \forall i,j \quad q_i \gamma_{i}^{2} = 0, \: \forall i\label{comp_var}
\end{align}
\end{subequations}
where (\ref{stationary_w}) and (\ref{stationary_q}) are the stationary conditions, (\ref{primal_constr1}) - (\ref{primal_var}) are the primal feasibility conditions, (\ref{dual_constr}) is dual feasibility condition, and (\ref{comp_constr}) - (\ref{comp_var}) are the complementary slackness conditions. We can then re-write the subproblem with complementary constraints:

\begin{subequations}
\label{comp_reformulation}
\begin{align}
    & \underset{(\lambda, z) \in \Xi, x,q}{\text{max}} \: \sum_i P_i q_i + \beta \sum_{i,j} d_{i,j} x_{i,j} \\
    & \text{s.t.} \quad 0 \leq  P_i d_{i,j} + u_j - s_i \perp  x_{i,j}, \: \forall i,j\\
    & 0 \leq  P_i - s_i \perp q_i \geq 0, \: \forall i \\
    & 0 \leq C_j \hat{t}_j (1 - z_j) - \sum_{i} x_{i,j} \perp u^{1}_j \geq 0,\: \forall j \\
    & 0 \leq \hat{y}_j - \sum_{i} x_{i,j} \perp u^2_j \geq 0, \: \forall j \\
    & 0 \leq \sum_{j} x_{i,j} + q_i - \bar{\lambda}_i - g_i \tilde{\lambda}_i \perp s_i \geq 0, \: \forall i\\
    & 0 \leq a_{i,j} C_j - x_{i,j} \perp \pi_{i,j} \geq 0, \: \forall i,j\\
    & \sum_{i} g_i \leq \Gamma;~ g_i \in [0,1], \: \forall i; \: \sum_{j} z_j \leq K;\: z_j \in \{0,1\}, \forall j, \label{comp_uncertainty}
\end{align}
\end{subequations}
where (\ref{comp_uncertainty}) is the constraint in uncertainty set $\Xi$. Note that a complimentary constraint $ 0 \leq x \perp \gamma \geq 0$ implies a set of constraints including ($x \geq 0, \gamma \geq 0, x \gamma = 0$). Notice that there exists an bilinear term between primal variable $x$ and its dual variable $\gamma$. Fortunately, this non-linear complimentary constraint can be transformed into exact linear constraints by using Fortuny-Amat transformation \cite{BigM}. Let $M$ be the sufficient large number and $v$ be the binary variable. The equivalent transformation of complementary constraint is shown as follows:
\begin{subequations}
\begin{align}
    0 \leq x \leq v M\\
    0 \leq \gamma \leq (1 - v) M
\end{align}
\end{subequations}
By applying this transformation for all complimentary constraints (\ref{comp_constr}) to (\ref{comp_var}), we can obtain the final reformulation of the subproblem (\ref{SPP}) as follows: 

\begin{subequations}
\label{KKT_based}
\begin{align}
& \underset{(\lambda, z) \in \Xi, x, q}{\text{max}} \: \sum_i P_i q_i + \beta \sum_{i,j} d_{i,j} x_{i,j} \\
& \text{s.t.} \quad 0 \leq x_{i,j} \leq M^1_{i,j} (1 - v^1_{i,j}), \forall i,j\\
& 0 \leq  P_i d_{i,j} + \pi_{i,j}  + u^1_j +u^2_j - s_i \leq M^1_{i,j} v^{1}_{i,j}, \: \forall i,j\\
& 0 \leq  P_i - s_i \leq M_{i}^{2} v^{2}_{i}, ~ \forall i \\
& 0 \leq q_i \leq M_{i}^{2} (1 - v^2_i), ~ \forall i \\
& 0 \leq C_j \hat{t}_j (1 - z_j) - \sum_{i} x_{i,j} \leq M_{j}^{3} v^{3}_{j},~ \forall j \\
& 0 \leq u^1_j \leq M_{j}^{3} (1 - v^{3}_{j}), ~ \forall j \\
& 0 \leq \hat{y}_j - \sum_{i} x_{i,j} \leq M_j^4 v^4_j, ~ \forall j\\
& 0 \leq u^2_j \leq M_j^4 (1 - v^4_j), ~ \forall j\\
& 0 \leq \sum_{j} x_{i,j} + q_i - \bar{\lambda}_i - g_i \tilde{\lambda}_i \leq M^{5}_{i} v^{5}_{i}, ~ \forall i\\
& 0 \leq s_i \leq M^{5}_{i} (1 - v^{5}_{i}), ~ \forall i \\
& 0 \leq a_{i,j}C_{i,j} - x_{i,j} \leq M_{i,j}^{6} v^{6}_{i,j}, ~ \forall i,j\\
& 0 \leq \pi_{i,j} \leq M_{i,j}^{6} (1 - v^{6}_{i,j}), ~ \forall i,j\\
& v_{i,j}^1,~ v_{i,j}^6 \in \{0,1\},~\forall i,j \\
& v_i^2, v_i^5 \in \{0,1\},~\forall i; v_j^3,~v_j^4 \in \{0,1\},~\forall j, \label{KKTend}
\end{align}
\end{subequations}
where $M^1_{i,j}, M_{i}^{2}, M_{j}^{3}, M_j^4, M^{5}_{i}$ and $M_{i,j}^{6}$ are sufficiently large numbers.

\subsection{ADR Reformulation}
\label{RC_reformulation}

To transform the ADR model (\ref{ADRp2}) into an MILP, we need to reformulate each robust constraint from (\ref{constr_ADR1}) to (\ref{constr_ADR7}) as  an equivalent set of linear equations. First, consider constraint (\ref{constr_ADR1}). It is easy to see that  (\ref{constr_ADR1}) is equivalent to:
\begin{subequations}
\label{ADRc1}
\begin{align}
& \phi \geq \max_{\mathbf{g} \geq 0,\mathbf{z} \geq 0} \sum_{i \in I}  P_{i} \bigg[ \sum_{e \in I} E_{i}^{e} (\bar{\lambda_e} + g_e \tilde{\lambda}_e) + \sum_{l \in J} F_{i}^l z_{l} + G_i \bigg] \nonumber \\
& + \beta \sum_{i \in I} \sum_{j\in J} d_{i,j} \bigg[ \sum_{e \in I} A_{i,j}^{e} (\bar{\lambda_e} + g_e \tilde{\lambda}_e) + \sum_{l \in J} B_{i,j}^l z_{l} + D_{i,j}\bigg]   \\
&\text{s.t.} \quad \sum_e g_e \leq \Gamma, \quad (\mu^{0}) \label{ADRc11}\\
& g_e \leq 1, ~ \forall e \quad (\eta_{e}^{0})\\
& \sum_{j} z_l \leq K, \quad   \: (v^{0})\\
& z_l \leq 1, ~\forall l, \quad (\sigma_{l}^{0}) \label{ADRc14} 
\end{align}
\end{subequations}
where $\mu^{0}$, $\eta_{e}^{0}$, $v_0$, and $\sigma_{l}^0$ are dual variables for constraints from (\ref{ADRc11}) to (\ref{ADRc14}), respectively.
Be rearranging the terms in  problem (\ref{ADRc1}), we can rewrite  problem (\ref{ADRc1}) as follows:

\begin{subequations}
\label{ADRcon1}
\begin{align}
& \phi \geq  \sum_{i \in I} \sum_{e \in I} P_i E_i^e \bar{\lambda}_e + \sum_{i \in I} P_i G_i + \beta \sum_{i \in I} \sum_{j \in J} \sum_{e \in I} d_{i,j} A_{i,j}^{e} \bar{\lambda}_e \nonumber \\
& + \beta \sum_{i \in I} \sum_{j \in J} d_{i,j} D_{i,j}  + \max_{\mathbf{g} \geq 0,\mathbf{z} \geq 0} \sum_{i \in I} \sum_{l \in J} P_i F_{i}^l z_l \nonumber \\
&  + \sum_{i \in I} \sum_{e \in I}  P_i E_i^e g_e \tilde{\lambda}_e +  \beta \sum_{i \in I} \sum_{j \in J} \sum_{l \in J} d_{i,j} B_{i,j}^l z_{l} \nonumber \\
& + \beta \sum_{i \in I} \sum_{j \in J} \sum_{e \in I} d_{i,j} A_{i,j}^{e} g_e  \tilde{\lambda}_e \\
&\text{s.t.} \quad \sum_e g_e \leq \Gamma, \quad (\mu^{0}) \label{ADRc1s}\\
& g_e \leq 1, ~ \forall e \quad (\eta_{e}^{0})\\
& \sum_{j} z_l \leq K, \quad   \: (v^{0})\\
& z_l \leq 1, ~\forall l. \quad (\sigma_{l}^{0}) \label{ADRc1e} 
\end{align}
\end{subequations}

By LP theory, the dual of the maximization problem on the right hand side of (\ref{ADRcon1}) is the following linear minimization  problem. Thus, problem (\ref{ADRcon1}) (i.e., the robust constraint  (\ref{constr_ADR1})) is equivalent to the following set of linear equations:
\begin{subequations}
\label{ADRconstraint1}
\begin{align}
& \phi - \sum_{i \in I} \sum_{e \in I} P_i E_i^e \bar{\lambda}_e - \sum_{i \in I} P_i G_i - \beta \sum_{i \in I} \sum_{j \in J} \sum_{e \in I} d_{i,j} A_{i,j}^{e} \bar{\lambda}_e  \nonumber \\
& - \beta \sum_{i \in I} \sum_{j \in J} d_{i,j} D_{i,j} -  k v^{0} - \Gamma \mu^{0} - \sum_{e \in I} \eta_{e}^{0} - \sum_{l \in J} \sigma_{l}^{0} \geq 0 \\
& \eta_{e}^{0} + \mu^{0} \geq \beta \sum_{i \in I} \sum_{j \in J} d_{i,j} A_{i,j}^{e} \tilde{\lambda}_{e} + \sum_{i \in I} P_{i} E_{i}^{e} \tilde{\lambda}_{e}, ~ \forall e\\
& \sigma_{l}^{0} + v^{0} \geq \beta \sum_{i \in I} \sum_{j \in J} d_{i,j} B_{i,j}^l + \sum_{i \in I} P_i F_{i}^l,~ \forall l\\
& \eta_{e}^{0}, v^{0}, \mu^{0} \geq 0; \quad  \sigma_{l}^{0} \geq 0, ~\forall l.
\end{align}
\end{subequations}

By following similar procedure, we can reformulate individual robust constraints from (\ref{constr_ADR2}) to (\ref{constr_ADR7}) into equivalent sets of linear equations. For the sake of brevity, we will present the final set of equations for each constraint only. Specifically, (\ref{constr_ADR2}) is equivalent to:
\begin{subequations}
\label{ADRconstraint2}
\begin{align}
& \sum_{i \in I} \sum_{e \in I} A_{i,j}^{e} \bar{\lambda}_e + \sum_{i \in I} D_{i,j} - C_{j}\hat{t}_j + \sum_{e \in I} \eta_{e,j}^{1} + \Gamma \mu_j^{1} \nonumber\\
& + \sum_{l \in J} \sigma_{l,j}^{1} + K v_{j}^{1} \leq 0, \: \forall j\\
& \eta_{e,j}^{1} + \mu_{j}^{1} \geq \sum_{i \in I}  A_{i,j}^{e} \tilde{\lambda}_{e}, \: \forall e, j\\
& \sigma_{l,j}^{1} +  v_{j}^{1} \geq \sum_{i \in I}  B_{i,j}^l, \: \forall l, l \neq j \\
& \sigma_{l,j}^{1} +  v_{j}^{1} \geq \sum_{i \in I}  B_{i,j}^l + C_{j}\hat{t}_j; \: \forall j, l = j \\
& \eta_{e,j}^{1} \geq 0, \: \forall e, j; \: v_{j}^{1}, \mu_{j}^{1} \geq 0, \: \forall j; \:  \sigma_{l,j}^{1} \geq 0, \forall l,j.
\end{align}
\end{subequations}

The equivalent reformulation of  constraint (\ref{constr_ADR3}) is:
\begin{subequations}
\label{ADRconstraint3}
\begin{align}
& \sum_{i \in I} \sum_{e \in I} A_{i,j}^{e} \bar{\lambda}_e + \sum_{i \in I} D_{i,j} - y_j + \sum_{e \in I} \eta_{e,j}^{1} + \Gamma \mu_j^{1} \nonumber\\
& + \sum_{l \in J} \sigma_{l,j}^{1} + K v_{j}^{1} \leq 0, \: \forall j\\
& \eta_{e,j}^{2} + \mu_{j}^{2} \geq \sum_{i \in I}  A_{i,j}^{e} \tilde{\lambda}_{e}, \: \forall e, j\\
& \sigma_{l,j}^{2} +  v_{j}^{2} \geq \sum_{i \in I}  B_{i,j}^l, \: \forall l \\
& \eta_{e,j}^{2} \geq 0, \: \forall e, j; \: v_{j}^{2}, \mu_{j}^{2} \geq 0, \: \forall j; \:  \sigma_{l,j}^{2} \geq 0, \forall l,j.
\end{align}
\end{subequations}

Similarly, constraint (\ref{constr_ADR4}) is equivalent to:
\begin{subequations}
\label{ADRconstraint4}
\begin{align}
& \sum_{j \in J} \sum_{e \in I} A_{i,j}^{e} \bar{\lambda}_e + \sum_{e \in I} E_{i}^{e} \bar{\lambda}_{e} + \sum_{j \in J} D_{i,j} + G_i - \bar{\lambda}_{i}  \nonumber\\
& - \sum_{e \in I} \eta_{i,e}^3 - \Gamma \mu_i^3 - \sum_{l \in J} \sigma_{i,l}^{3} - K v_{i}^{3} \geq 0, \forall i\\
& \eta_{i,e}^{3} + \mu_{i}^3 \geq - \sum_{j \in J} A_{i,j}^{e} \tilde{\lambda}_{e} - E_{i,e} \tilde{\lambda}_{e}, \: \forall i,e \\
& \eta_{i,e}^{3} + \mu_{i}^3 \geq - \sum_{j \in J} A_{i,j}^{e} \tilde{\lambda}_{e} - E_{i,e} \tilde{\lambda}_{e} + \tilde{\lambda}_i, \: \forall i,e = i\\
&  \sigma_{i,l}^{3} + v_{i}^{3} \geq - \sum_{j \in J}  B_{i,j}^l - F_{i}^l, \: \forall i, l \\
& \eta_{i,e}^{3} \geq 0, \: \forall i,e; \: v_{i}^{3}, \mu_{i}^3 \geq 0, \: \forall i; \:  \sigma_{i,l}^{3} \geq 0, \forall i,l.
\end{align}
\end{subequations}

Constraint (\ref{constr_ADR5}) is equivalent to:
\begin{subequations}
\label{ADRconstraint5}
\begin{align}
& \sum_{e \in I} E_{i}^{e} \bar{\lambda}_e + G_i - \sum_{e \in I} \eta_{i,e}^3 - \Gamma \mu_{i}^4 - \sum_{l \in J} \sigma_{i,l}^{4} - K v_{i}^4 \geq 0, ~ \forall i\\
& -\eta_{i,e}^{4}-\mu_{i}^4 \leq E_{i}^{e} \tilde{\lambda}_{e}, ~ \forall i,e\\
& - \sigma_{i,l}^{4} -  v_{i}^4 \leq F_{i}^l, ~ \forall i,l\\
& \eta_{i,e}^{4} \geq 0, ~ \forall i,e; \: v_{i}^4, \mu_{i}^4 \geq 0, ~ \forall i,j; \:  \sigma_{i,l}^{4} \geq 0,~ \forall i,l.
\end{align}
\end{subequations}

Constraint (\ref{constr_ADR6}) is equivalent to:
\begin{subequations}
\label{ADRconstraint6}
\begin{align}
& \sum_{e} A_{i,j}^{e} \bar{\lambda}_e + D_{i,j} -  \sum_{e} \eta_{i,j,e}^5 - \Gamma \mu_{i,j}^5 - \sum_{l} \sigma_{i,j,l}^{5} - K v_{i,j}^5  \nonumber\\
& \geq 0,~ \forall i,j\\
& -\eta_{i,j,e}^{5} - \mu_{i,j}^5 \leq A_{i,j}^{e} \tilde{\lambda}_{e}, ~ \forall i,j,e\\
& - \sigma_{i,j,l}^{5} - v_{i,j}^{5} \leq B_{i,j}^l, ~ \forall i,j,l\\
& \eta_{i,j,e}^{5} \geq 0, ~ \forall i,j,e; \: v_{i,j}^{5}, \mu_{i,j}^5 \geq 0, ~ \forall i,j; \:  \sigma_{i,j,l}^{5} \geq 0, ~\forall i,j,l.
\end{align}
\end{subequations}

Finally,  constraint (\ref{constr_ADR7}) is equivalent to:
\begin{subequations}
\label{ADRconstraint7}
\begin{align}
& \sum_{e} A_{i,j}^{e} \bar{\lambda}_e + D_{i,j} -  \sum_{e} \eta_{i,j,e}^6 - \Gamma \mu_{i,j}^6 - \sum_{l} \sigma_{i,j,l}^{6}  - K v_{i,j}^6 \nonumber\\
&  \leq a_{i,j} C_j, ~ \forall i,j\\
& -\eta_{i,j,e}^{6} - \mu_{i,j}^6 \leq A_{i,j}^{e} \tilde{\lambda}_{e}, ~\forall i,j,e\\
& - \sigma_{i,j,l}^{6} - v_{i,j}^{6} \leq B_{i,j}^l, ~ \forall i,j,l\\
& \eta_{i,j,e}^{6} \geq 0, ~ \forall i,j,e; \: v_{i,j}^{6}, \mu_{i,j}^6 \geq 0, ~ \forall i,j; \:  \sigma_{i,j,l}^{6} \geq 0, ~\forall i,j,l.
\end{align}
\end{subequations}

By replacing the sets of constraints from (\ref{ADRconstraint1}) to (\ref{ADRconstraint7}) into the ADR model (\ref{ADRp2}), we obtain the MILP as shown in (\ref{MILP_ADR}).

\subsection{Deterministic Formulation}
\label{DetermiA}
The deterministic formulation of the service placement and sizing problem is the following MILP problem:

\begin{subequations}
\begin{align}
\label{deterministic}
& \underset{y,t,x,q}{\text{min}}  ~~\sum_j p_j y_j +\sum_j h_j t_j   + \sum_i P_i q_i + \beta \sum_{i,j} d_{i,j} x_{i,j}  \nonumber\\
& \text{s.t.} 
~~ \sum_{j} p_{j} y_{j} + f_j (1 - t_{0})t_{j} \leq B\\
& 0 \leq y_j \leq C_j t_j, ~ \forall j\\
& \sum_{i} x_{i,j}\leq y_j t_j, ~ \forall j \\
& \sum_{j} x_{i,j} + q_i \geq \bar{\lambda}_i,~ \forall i \\
& 0\leq x_{i,j} \leq a_{i,j} C_j, ~\forall i,j \\
& t_j \in \{0,1\}, ~ \forall j;~~ y_{j} \in \mathbb{Z},~\forall j; ~~q_i \geq 0,~\forall i.
\end{align}
\end{subequations}

In the deterministic algorithm, the SP first solves this deterministic problem, using the nominal demand $\bar{\lambda}$ to obtain the optimal service placement and resource procurement solution $(y^*, t^*)$. In the actual operation stage, after observing the actual realization of the demand and EN failures, given the $(y^*, t^*)$ as the input, the SP will solve the problem (\ref{actual}) to find the optimal workload allocation decision $(x, q)$. 

\subsection{Two-stage Stochastic Programming Model}
\label{SOappen}

Instead of optimizing the workload allocation in the worst-case uncertainty realization in the second stage as in the ARO model ($\mathcal{P}_1$), in the stochastic model, the SP aims to optimize the expected cost in the second stage. Let $\xi = \{ \xi^1, \ldots, \xi^n, \ldots, \xi^N\}$ be the set of $N$ scenarios representing the uncertainties of the demand and EN failures, $n$ is the scenario index and $\xi^n = (\lambda^n, z^n)$. Also, denote by $\pi^n$ the probability scenario $n$. The two-stage stochastic edge service placement and workload allocation model is: 
\begin{subequations}
\begin{align}
& \underset{y,t,x,q}{\text{min}}  \sum_{j} p_{j} y_{j} + \sum_j f_j(1 - t_0) t_j \nonumber\\
& +  \: \sum_{n = 1}^{N} \pi_n \bigg\{ \beta \sum_{i} \sum_{j} d_{i,j}^{n} x_{i,j}^{n} + \sum_{i} P_i^{n} q_i^{n} \bigg\} \\
&\text{s.t.} ~~ 0 \leq y_j \leq C_j t_j, \: \forall j \quad \label{SO_constr1} \\
& \sum_j p_{j} y_{j} + \sum_j h_j t_{j} \leq B, \label{SO_constr2}\\
& \sum_{i} x_{i,j}^{n} \leq y_{j} t_{j} (1 - z_j^{n}) , \: \forall j, n \label{SO_constr3}\\
& q_{i}^{n} + \sum_{j} x_{i,j}^{n} \geq \lambda_{i}^{n}, \; \forall i, n \label{SO_constr4}\\
&  t_j \in \{0,1\},\: \forall j; \quad y_{j} \in \mathbb{Z},~\forall j \label{SO_VAR1}\\
&  0 \leq x_{i,j}^{n} \leq a_{i,j} C_j, \forall i,j,n;~~ \: q_i^{n} \geq 0, \:\forall i,n, 
\end{align}
\end{subequations}\\
where $(x^n, q^n)$ expresses the second-stage decision corresponding to scenario $n$. 


\subsection{Computational Time with $\Gamma = 5$}
\label{appendix:gamma5}

Table \ref{table:runtimeGamma5} presents the computational time of the duality-based CCG algorithm and the ADR algorithm with $\Gamma = 5$. In cases where an algorithm fails to produce a result within 10000 seconds, we manually terminate its execution.

\begin{table}[h!]
\centering
\begin{tabular}{|c|c|clll|cll|}
\hline
Problem size                                                                             & K                      & \multicolumn{4}{c|}{Duality-based CCG (s)}                                                                            & \multicolumn{3}{c|}{ADR (s)}     \\ \hline
\multirow{4}{*}{\begin{tabular}[c]{@{}c@{}}I  = 20\\ J = 20\\ ($\Gamma = 5$)\end{tabular}}  & 1                      & \multicolumn{4}{c|}{15.48}                                                                                       & \multicolumn{3}{c|}{76.24}  \\ \cline{2-9} 
                                                                                         & 3                      & \multicolumn{4}{c|}{32.93}                                                                                       & \multicolumn{3}{c|}{82.87}  \\ \cline{2-9} 
                                                                                         & 5                      & \multicolumn{4}{c|}{\begin{tabular}[c]{@{}c@{}}167.87 (0.5\% gap)\\ 64.02 (1\% gap)\end{tabular}}      & \multicolumn{3}{c|}{97.73}  \\ \cline{2-9} 
                                                                                         & 7                      & \multicolumn{4}{c|}{\begin{tabular}[c]{@{}c@{}}172.27 (0.5\% gap)\\ 109.35 (1\% gap)\end{tabular}}  & \multicolumn{3}{c|}{98.12}  \\ \hline
\multirow{4}{*}{\begin{tabular}[c]{@{}c@{}}I = 40\\ J = 40\\ ($\Gamma = 5$)\end{tabular}}   & \multicolumn{1}{l|}{1} & \multicolumn{4}{c|}{309.81}                                                                                      & \multicolumn{3}{c|}{2198}   \\ \cline{2-9} 
                                                                                         & \multicolumn{1}{l|}{3} & \multicolumn{4}{c|}{450.23}                                                                                      & \multicolumn{3}{c|}{2207}   \\ \cline{2-9} 
                                                                                         & \multicolumn{1}{l|}{5} & \multicolumn{4}{c|}{\begin{tabular}[c]{@{}c@{}}5332.72 (0.5\% gap)\\ 865.27(1\% gap)\end{tabular}}   & \multicolumn{3}{c|}{2312}   \\ \cline{2-9} 
                                                                                         & \multicolumn{1}{l|}{7} & \multicolumn{4}{c|}{\begin{tabular}[c]{@{}c@{}}10364 (0.5\% gap)\\ 1753 (1\% gap)\end{tabular}}     & \multicolumn{3}{c|}{2332}   \\ \hline
\multirow{4}{*}{\begin{tabular}[c]{@{}c@{}}I = 100 \\ J = 20\\ ($\Gamma = 5$)\end{tabular}} & 1                      & \multicolumn{4}{c|}{29.89}                                                                                       & \multicolumn{3}{c|}{2839.9} \\ \cline{2-9} 
                                                                                         & 3                      & \multicolumn{4}{c|}{113.24}                                                                                      & \multicolumn{3}{c|}{2953.7} \\ \cline{2-9} 
                                                                                         & 5                      & \multicolumn{4}{c|}{\begin{tabular}[c]{@{}c@{}}479.3 (0.5\% gap)\\ 142.79 (1\% gap)\end{tabular}}   & \multicolumn{3}{c|}{3012.3} \\ \cline{2-9} 
                                                                                         & 7                      & \multicolumn{4}{c|}{\begin{tabular}[c]{@{}c@{}}1298.6 (0.5\% gap)\\  413.24 (1\% gap)\end{tabular}}  & \multicolumn{3}{c|}{3151.1} \\ \hline
\multirow{4}{*}{\begin{tabular}[c]{@{}c@{}}I = 500 \\ J = 30\\ ($\Gamma = 5$)\end{tabular}} & 1                      & \multicolumn{4}{c|}{75.54}                                                                                       & \multicolumn{3}{c|}{NA}       \\ \cline{2-9} 
                                                                                         & 3                      & \multicolumn{4}{c|}{249.59}                                                                                      & \multicolumn{3}{c|}{NA}       \\ \cline{2-9} 
                                                                                         & 5                      & \multicolumn{4}{c|}{\begin{tabular}[c]{@{}c@{}}3979.3 (0.5\% gap)\\ 407.15 (1\% gap)\end{tabular}}  & \multicolumn{3}{c|}{NA}       \\ \cline{2-9} 
                                                                                         & 7                      & \multicolumn{4}{c|}{\begin{tabular}[c]{@{}c@{}}7332.5 (0.5\% gap)\\ 562.79 (1\% gap)\end{tabular}}  & \multicolumn{3}{c|}{NA}       \\ \hline
\multirow{4}{*}{\begin{tabular}[c]{@{}c@{}}I = 1000\\ J = 50\\ ($\Gamma = 5$)\end{tabular}} & \multicolumn{1}{l|}{1} & \multicolumn{4}{c|}{562.15}                                                                                      & \multicolumn{3}{c|}{NA}       \\ \cline{2-9} 
                                                                                         & \multicolumn{1}{l|}{3} & \multicolumn{4}{c|}{757.65}                                                                                      & \multicolumn{3}{c|}{NA}       \\ \cline{2-9} 
                                                                                         & \multicolumn{1}{l|}{5} & \multicolumn{4}{c|}{\begin{tabular}[c]{@{}c@{}}8547.72 (0.5\% gap)\\ 1435.4 (1\% gap)\end{tabular}} & \multicolumn{3}{c|}{NA}       \\ \cline{2-9} 
                                                                                         & \multicolumn{1}{l|}{7} & \multicolumn{4}{c|}{\begin{tabular}[c]{@{}c@{}} NA (0.5\% gap)\\ 2607.6 (1\% gap)\end{tabular}}        & \multicolumn{3}{c|}{NA}       \\ \hline
\end{tabular}
\caption{Run-time experiments}
\label{table:runtimeGamma5}
\end{table}

\end{document}